\documentclass{amsart}

\usepackage{amssymb}
\usepackage{amsmath}
\newtheorem{theorem}{Theorem}
\newtheorem{lemma}[theorem]{Lemma}
\newtheorem{corollary}[theorem]{Corollary}

\theoremstyle{definition}
\newtheorem{remark}[theorem]{Remark}

\newcommand{\N}{\mathbb{Z}_{>0}}
\newcommand{\Z}{\mathbb{Z}}
\newcommand{\Q}{\mathbb{Q}}

\newcommand{\F}{\mathbb{F}}

\newcommand{\set}[2]{\{#1 \: | \; #2 \}}
\newcommand{\rad}{\mathrm{rad}}
\newcommand{\Norm}{\mathrm{Norm}}
\newcommand{\Ideal}[1]{\mathfrak{#1}}
\newcommand{\PP}{\mathbb{P}}
\newcommand{\E}{\mathfrak{E}}
\newcommand{\ord}{\mathrm{ord}}

\title[A refined modular approach to $x^2+y^{2n}=z^3$]%
{A refined modular approach to the Diophantine equation
$x^2+y^{2n}=z^3$}

\author{Sander R. Dahmen}

\email{dahmen@math.ubc.ca}

\address{Department of Mathematics\\
The University of British Columbia\\ Room 121, 1984 Mathematics
Road\\ Vancouver, B.C.\\ Canada, V6T 1Z2}

\subjclass[2000]{11D41 (primary), 11F11, 11F80, 11G05
(secondary).}

\keywords{generalized Fermat equation, modular form, Galois
representation, elliptic curve}

\date{\today}

\begin{document}

\maketitle

\begin{abstract}
Let $n$ be a positive integer and consider the Diophantine
equation of generalized Fermat type $x^2+y^{2n}=z^3$ in nonzero
coprime integer unknowns $x,y,z$. Using methods of modular forms
and Galois representations for approaching Diophantine equations,
we show that for $n \in \{5, 31\}$ there are no solutions to this
equation. Combining this with previously known results, this
allows a complete description of all solutions to the Diophantine
equation above for $n \leq 10^7$. Finally, we show that there are
also no solutions for $n\equiv -1 \pmod{6}$.
\end{abstract}

\section{Introduction}\label{sec Introduction}

Since the proof of Fermat's Last Theorem \cite{Wiles 95},
\cite{Taylor Wiles 95} and the establishment of the full
Shimura-Taniyama-Weil conjecture \cite{BCDT 01}, many Diophantine
equations were solved using a strategy similar to that of the
proof of FLT. Amongst them are various so-called generalized
Fermat equations. These are Diophantine equations of the form
\begin{equation}\label{eqn generalized Fermat}
ax^p+by^q=cz^r, \quad x,y,z \in \Z, \quad xyz\not=0, \quad
\gcd(x,y,z)=1
\end{equation}
where $a,b,c$ are nonzero integers and $p,q,r$ are integers $\geq
2$. The nature of the solutions depends very much on the quantity
\[\chi(p,q,r):=\frac{1}{p}+\frac{1}{q}+\frac{1}{r}-1.\]
If $\chi(p,q,r)>0$, then there are either no solutions to
(\ref{eqn generalized Fermat}) or infinitely many. In the latter
case, there exist finitely many triples $(X,Y,Z)$ of binary forms
$X,Y,Z \in \Q[u,v]$ satisfying $aX^p+bY^q=cZ^r$ such that every
solution $(x,y,z)$ to (\ref{eqn generalized Fermat}) can be
obtained by specializing the variables $u,v$ to integers for one
of these triples; see \cite{Beukers 98}. If $\chi(p,q,r)=0$, then
the determination of the solutions to (\ref{eqn generalized
Fermat}) basically boils down to finding rational points on curves
of genus one. If $\chi(p,q,r)<0$, then there exists a curve $C$ of
genus $\geq 2$ and a covering $\phi: C \to \PP^1$, both defined
over a number field $K$, such that for every solution $(x,y,z)$ to
(\ref{eqn generalized Fermat}) we have $[ax^p:cz^r] \in
\phi(C(K))$. Since by Faltings' theorem $C(K)$ is finite, there
are only finitely many solutions to (\ref{eqn generalized Fermat})
in this case; see \cite{Darmon Granville 95}.

Of special interest is the generalized Fermat equation with
$a=b=c=1$. In the case $\chi(p,q,r)>0$, all the (parameterized)
solutions are known; see \cite{Beukers 98} and \cite{Edwards 04}.
If $\chi(p,q,r)=0$, the only solution, up to sign and permutation,
is given by the Catalan solution $2^3+1^6=3^2$. If
$\chi(p,q,r)<0$, the only solutions known, up to sign and
permutation, are given by $2^3+1^q=3^2 \ (q \geq 7)$ and
\begin{multline*}
17^3+2^7=71^2, \quad 2213459^2+1414^3=65^7,\\
15312283^2+9262^3=113^7, \quad 76271^3+17^7=21063928^2;
\end{multline*}
\[1549034^2+33^8=15613^3,\quad 96222^3+43^8=30042907^2;\]
\[13^2+7^3=2^9;\]
\[7^2+2^5=3^4,\quad 11^4+3^5=122^2.\]
For a list containing many (families of) triples $(p,q,r)$ for
which (\ref{eqn generalized Fermat}) has been solved in this case,
we refer to \cite[Table 1]{PSS 07}. In this paper we will focus on
the special case $(p,q,r)=(2,2n,3)$ for $n \in \Z_{>0}$, i.e. we
are concerned with the Diophantine equation
\begin{equation}\label{eqn 2_2l_3}
x^2+y^{2n}=z^3, \quad x,y,z \in \Z, \quad xyz \not=0, \quad
\gcd(x,y,z)=1.
\end{equation}

\subsubsection*{Previously known results.}

For $n=1,2$ we have $\chi(2,2n,3)>0$, and in both cases there are
infinitely many solutions to (\ref{eqn 2_2l_3}). By factoring
$x^2+y^2=z^3$ over the Gaussian integers as $(x+iy)(x-iy)=z^3$ one
readily gets, that a solution to (\ref{eqn 2_2l_3}) for $n=1$
satisfies
\[(x,y,z)=(u(u^2-3 v^2), v (3 u^2-v^2),u^2+v^2)\]
for some $u,v \in \Z$ with $\gcd(u,v)=1$ and $uv\not=0$. By
demanding that $v (3u^2-v^2)$ is a square, one can obtain
parameterized solutions to (\ref{eqn 2_2l_3}) for $n=2$. This was
carried out by Zagier and reported in \cite{Beukers 98}; up to
sign there are $4$ parameterizations, $3$ of which have
coefficients in $\Z$. In \cite{Edwards 04} parameterized solutions
to (\ref{eqn 2_2l_3}) for $n=2$ were obtained by a different
method, the same parameterizations as in \cite{Beukers 98} were
found, except that the one with non-integer coefficients was
replaced by one with coefficients in $\Z$.

For $n=3$ there are no solutions to (\ref{eqn 2_2l_3}). This
follows readily from the well-known fact that the only rational
points on the elliptic curve given by $Y^2=X^3-1$ are
$(X,Y)=(1,0)$ and the point at infinity.

By demanding that $y$ is a square for the parameterized solutions
to (\ref{eqn 2_2l_3}) for $n=2$, one obtains genus $2$ curves such
that every solution to (\ref{eqn 2_2l_3}) for $n=4$ corresponds to
a rational point on one of these curves. This, together with the
determination of all rational points on these curves using
effective Chabauty methods, was carried out in \cite{Bruin 99}.
The result is that, up to sign, the only solution to (\ref{eqn
2_2l_3}) for $n=4$ is given by $1549034^2+33^8=15613^3$.

We see that it suffices to deal with $n$ a prime $> 3$. In
\cite{Chen 08}, the equation is studied using methods of modular
forms and Galois representations for approaching Diophantine
equations. In particular, an explicit criterion is given for
showing that (\ref{eqn 2_2l_3}) has no solution for a given prime
$n >7$. This criterion is verified for all primes $7 < n < 10^7$
except $n=31$, thereby showing that (\ref{eqn 2_2l_3}) has no
solutions for these values of $n$. The nonexistence of solutions
for $n=7$ follows from the work of \cite{PSS 07}, so the only
small values of $n$ left to deal with are $n=5,31$.

At this point we should mention that, for $m\in \Z_{>1}$, the
Diophantine equation
\[x^2+y^3=z^m, \quad x,y,z \in \Z, \quad xyz \not=0, \quad
\gcd(x,y,z)=1\] is much harder to deal with than (\ref{eqn
2_2l_3}) is (apart from some small values of $m$), even if we just
consider even $m$. This is because of the Catalan solution
$3^2+(-2)^3=1^m$. For $m \leq 5$ there are infinitely many
solutions. The parameterization for $m=2$ is again very easy to
obtain, for $m=3,4,5$ we refer to \cite{Edwards 04} ($m=3$ was
earlier done by Mordell and $m=4$ by Zagier). The case $m=6$ is
classical, it amounts to determining the rational points on the
elliptic curve given by $Y^2=X^3+1$. The cases $m=7,8,9,10$ are
solved in \cite{PSS 07}, \cite{Bruin 03}, \cite{Bruin 05},
\cite{Siksek} respectively.

\subsubsection*{New results.}

In this paper, we will extend the criterion mentioned above to all
primes $n>3$ and use it to show that there are no solutions for
$n=5$. This is basically done by showing that a Frey curve
associated to a (hypothetical) solution has irreducible mod-$n$
representation for $n=5,7$. By using extra local information
obtained from classical algebraic number theory, we obtain a
refined criterion, which is used to show that (\ref{eqn 2_2l_3})
has no solutions for $n=31$. So from a Diophantine point of view
our main result is the following.

\begin{theorem}\label{thm main}
If $n \in \{5,31\}$, then (\ref{eqn 2_2l_3}) has no solutions.
\end{theorem}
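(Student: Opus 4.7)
The plan is to attach to any hypothetical solution $(x,y,z)$ of (\ref{eqn 2_2l_3}) a Frey elliptic curve $E$ whose mod-$n$ Galois representation $\bar\rho_{E,n}$ is (a) forced, after modularity of $E$ (Wiles, BCDT) and Ribet-style level lowering, to come from a weight-$2$ newform $f$ on $\Gamma_0(N)$ for an explicit small level $N$ supported only at $2$ and $3$, and (b) amenable to elimination by a local analysis at small auxiliary primes $\ell$. Concretely, write $x^2+(y^n)^2=z^3$, factor in $\Z[i]$, and use the $n=1$ parameterization to produce $u,v\in\Z$ with $x+iy^n = \pm(u+iv)^3$, so that $y^n = \pm v(3u^2-v^2)$. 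Attach to this the standard Frey curve $E_{u,v}$ in Weierstrass form whose discriminant is essentially an $n$-th power away from $6$. A finite case split according to the $2$-adic and $3$-adic behaviour of $(u,v)$ gives in each branch an explicit small $N$, and the candidate newforms $f$ at this level can be enumerated with modular symbols.

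\textbf{The case $n=5$.} The extra work compared to Chen's criterion is the irreducibility of $\bar\rho_{E,5}$, together with that of the auxiliary $\bar\rho_{E,7}$ needed to extend the criterion below $n=7$. I would prove these directly by ruling out each possible isogeny character: since the conductor of $E$ is supported only at $2$ and $3$, the ramification of any putative quotient character is highly constrained, and combining this with the Weil pairing and the known behaviour of $E$ at $2$ and $3$ forces a contradiction. With irreducibility in hand, for each candidate newform $f$ and a small prime $\Ideal{p}$ of its coefficient field above $5$, I would compare $a_\ell(f) \pmod{\Ideal{p}}$ with the finite set of possible Frobenius traces $a_\ell(E_{u,v})$ as $(u,v) \pmod{\ell}$ varies over admissible residues; running over several small $\ell$ coprime to $6n$ eliminates every candidate.

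\textbf{The case $n=31$.} This is the main obstacle. A direct run of the criterion from the previous step leaves residual newforms $f$ for which no small auxiliary $\ell$ separates $a_\ell(f) \pmod{\Ideal{p}}$ from the set of possible $a_\ell(E_{u,v})$. To dispose of them I would refine the set of allowable residues $(u,v) \pmod \ell$ by exploiting classical algebraic number theory in $\Z[i]$ (and, when helpful, in the ring of integers of $\Q(\sqrt{-3})$): the coprime-cube factorization of $x \pm iy^n$ forces specific valuation and congruence conditions on $u+iv$ modulo primes above $\ell$, which translate into sharper restrictions on $(u,v) \pmod{\ell}$. Feeding these into the trace comparison shrinks the reachable set of Frobenius traces modulo $\Ideal{p}$ enough to kill the remaining newforms. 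The hardest part will be choosing the auxiliary primes $\ell$ and the refining local conditions so that the enumeration stays feasible while simultaneously ruling out every surviving newform at level $N$, and checking that no combinatorial obstruction conspires to leave some $f$ untouched.
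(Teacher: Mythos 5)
Your overall skeleton (descent in $\Z[i]$, Frey curve, modularity plus level lowering to a level supported at $\{2,3\}$, elimination by local data) matches the paper, but three of your concrete steps would fail as described. First, the irreducibility of $\rho_5^E$: your plan of ruling out isogeny characters via ramification constraints can at best show that the putative $5$-isogeny, combined with the rational $2$-torsion point of $E$, yields a noncuspidal rational point on $X_0(10)$ --- but $X_0(10)$ has genus zero with a rational cusp and hence \emph{infinitely many} noncuspidal rational points, so no contradiction follows. (Also, the conductor of $E$ is not supported at $\{2,3\}$; only the level-lowered representation is.) The paper has to work considerably harder here: it compares $j-1728$ for the Frey curve with the $j$-map of $X_0(5)$, obtains a curve in $\PP^1\times\PP^1$ covering an explicit elliptic curve of rank $0$, and deduces that the only rational points correspond to cusps. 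Second, your sieve at ``several small $\ell$ coprime to $6n$'' with all residues $(u,v)\bmod\ell$ admissible can never eliminate anything: the residues $(u,v)\equiv(\pm2,\pm3)$ always produce a curve of conductor $96$, i.e.\ one congruent to the level-lowered target, so the trace comparison is automatically satisfied. The sieve only has a chance at primes $p\equiv 1\pmod l$, and only after the further descent $3v=r^l$, $3u^2-v^2=3s^l$ (which itself requires invoking Bennett--Skinner on $a^n+b^n=3c^2$ to exclude the case $\gcd(v,3u^2-v^2)=1$) restricts $(u,v)\bmod p$ via $l$-th power conditions.

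Third, and most importantly for $n=31$: the refinement cannot come from $\Z[i]$ or $\Q(\sqrt{-3})$, which only reproduce the parameterization you already have. The extra local information in the paper comes from factoring $3u^2-v^2=3s^l$ over the \emph{real} quadratic ring $\Z[\sqrt3]$, as $(\sqrt3 u-v)(\sqrt3 u+v)$, and the entire difficulty is the infinite unit group there: the factorization yields $\sqrt3 u\mp v=\sqrt3\,x_i^l\epsilon^{\pm1}$ with an undetermined unit $\epsilon$, and reducing modulo a prime above $p$ gives a genuinely smaller admissible set $S'_{k,p}$ only when $p$ splits in $\Z[\sqrt3]$ and the fundamental unit $2+\sqrt3$ reduces to a $k$-th root of unity, i.e.\ $p\mid\Norm_{\Q(\sqrt3)/\Q}\bigl((2+\sqrt3)^k-1\bigr)$; otherwise one recovers exactly the unrefined condition. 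The proof for $l=31$ then succeeds with the specific choice $k=718$, $p=22259$, where $S'_{k,p}$ has only $13$ elements up to sign and none of them matches $a_p(E_0)^2\equiv 8\pmod{31}$. Without identifying the correct quadratic field, the unit condition, and such a $(k,p)$, the $n=31$ case does not go through.
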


Although we focus on very specific equations, we feel that the
methods we use to overcome the earlier difficulties, could
definitely be used in other cases as well. Combining our main
result with the previously known results mentioned above, we
arrive at a description of all solutions for $n \leq 10^7$.

\begin{corollary}\label{cor main}
Let $n \in \N$ with $n \leq 10^7$. If $n \in \{1,2,4\}$, then
(\ref{eqn 2_2l_3}) has solutions, all of which are described
above. If $n \not\in \{1,2,4\}$, then (\ref{eqn 2_2l_3}) has no
solutions.
\end{corollary}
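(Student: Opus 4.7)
The plan is to combine Theorem~\ref{thm main} with the previously cited results via a standard descent on the exponent. The key observation is that if $(x,y,z)$ solves (\ref{eqn 2_2l_3}) for exponent $n$ and $m$ is a positive divisor of $n$, then $(x,\,y^{n/m},\,z)$ is again a triple of nonzero coprime integers satisfying $x^2 + (y^{n/m})^{2m} = z^3$. Hence nonexistence of solutions for exponent $m$ propagates to every multiple of $m$. The solutions for $n \in \{1,2,4\}$ being already exhibited in the introduction, it suffices to prove there are no solutions when $n \leq 10^7$ and $n \notin \{1,2,4\}$.

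I would split into two cases depending on the prime factorization of $n$. In Case~A, $n$ has an odd prime divisor $p$; since $n \leq 10^7$, so is $p$. Any solution for $n$ descends to one for $p$, so it is enough to rule out all odd primes $p \leq 10^7$: the case $p=3$ follows from the known rational points on $Y^2 = X^3-1$; $p=5$ and $p=31$ are covered by Theorem~\ref{thm main}; $p=7$ by \cite{PSS 07}; and every remaining prime $7 < p < 10^7$ with $p \neq 31$ by \cite{Chen 08}.

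In Case~B, $n$ is a power of $2$ with $n \geq 8$. Write $m = n/4$, so $m \geq 2$ is itself a power of $2$ (in particular $m$ is even). Any solution $(x,y,z)$ for $n$ yields $(x,y^m,z)$ as a solution for exponent $4$; by \cite{Bruin 99} the only such solution has $|y^m|=33$, and since $m$ is even we must have $y^m = 33$. But $33 = 3 \cdot 11$ is squarefree and greater than $1$, hence not a perfect $m$-th power for any $m \geq 2$, a contradiction.

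There is no real obstacle: all the heavy lifting has already been done in Theorem~\ref{thm main} and in \cite{Bruin 99}, \cite{PSS 07}, \cite{Chen 08}. The only thing to be careful about is the pure power-of-two case, which is not immediately handled by a prime exponent and requires the explicit solution for $n=4$ together with the observation that $33$ is not a perfect power.
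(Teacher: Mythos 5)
Your proof is correct and takes essentially the same route the paper intends: descent on the exponent (a solution for $n$ yields one for every divisor of $n$) combined with Theorem~\ref{thm main}, the classical result for $n=3$, \cite{Bruin 99} for $n=4$, \cite{PSS 07} for $n=7$, and the computation in \cite{Chen 08} for the remaining primes $7<p<10^7$. The only piece the paper leaves implicit is the pure power-of-two case $n=2^k$ with $k\geq 3$, and your reduction to exponent $4$ together with the observation that $33$ is not a perfect $m$-th power for even $m\geq 2$ is exactly the intended argument.
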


Finally, by applying a key idea from \cite{Chen Siksek 09}, we
solve (\ref{eqn 2_2l_3}) for infinitely many (prime) values of
$n$.

\begin{theorem}\label{thm n congruence}
If $n \in \N$ with $n \equiv -1 \pmod{6}$, then (\ref{eqn 2_2l_3})
has no solutions.
\end{theorem}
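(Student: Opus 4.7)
The plan is to reduce to the case of prime $n$, then combine the descent from the Gaussian-integer parameterization of $x^2 + Y^2 = z^3$ with the Frey curve / modularity / level-lowering approach, exploiting the Chen--Siksek trick at the final step.

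First, I would show we may assume $n = p$ is prime. Since $\gcd(n, 6) = 1$ and $n \equiv -1 \pmod 6$, not every prime factor of $n$ can be $\equiv 1 \pmod 6$, so $n$ admits a prime factor $p \equiv -1 \pmod 6$; substituting $y \leftarrow y^{n/p}$ in a solution of (\ref{eqn 2_2l_3}) for $n$ gives one for $p$. Henceforth $p$ is a prime $\geq 5$ with $p \equiv 2 \pmod 3$. With $Y := y^p$, the equation $x^2 + Y^2 = z^3$ admits the Gaussian-integer parameterization recalled in the introduction: after possibly swapping $x$ and $Y$ and absorbing signs, there exist coprime nonzero integers $u, v$ with
\[
y^p = v(3u^2 - v^2), \quad x = u(u^2 - 3v^2), \quad z = u^2 + v^2.
\]
Since $\gcd(v, 3u^2 - v^2)$ divides $\gcd(v, 3)$, there are two cases. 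If $3 \nmid v$, the two factors are coprime and each is a $p$-th power ($p$ odd absorbs signs), giving the descent equation
\[
a^{2p} + b^p = 3u^2 \qquad (y = ab).
\]
If $3 \mid v$, tracking $3$-adic valuations in $y^p = v(3u^2 - v^2)$ forces $v_3(v) = pt - 1$ for some $t \geq 1$, and an analogous descent yields
\[
d^p + 3^{2pt-3} c^{2p} = u^2 \qquad (y = 3^t cd).
\]

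Second, to each descent equation I would associate a Frey elliptic curve $E$ over $\Q$ following the Bennett--Skinner recipe for signature $(p, p, 2)$ (in Case B, after rewriting the equation as $u^2 - 3V^2 = d^p$ with $V = 3^{pt-2} c^p$; a $\Q$-curve over $\Q(\sqrt{3})$ may be needed). After confirming irreducibility of $\bar{\rho}_{E, p}$, which is routine for $p \geq 5$, modularity combined with Ribet's level-lowering theorem produces a weight-$2$ cuspidal newform $f$ of an explicitly computable level $N$ with $\bar{\rho}_{E,p} \cong \bar{\rho}_{f,p}$.

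Finally, I would invoke the idea from \cite{Chen Siksek 09}: compare the action of inertia at the prime $3$ (and, if needed, of a small auxiliary Frobenius) on $\bar{\rho}_{E,p}$ and $\bar{\rho}_{f,p}$. The hypothesis $p \equiv 2 \pmod 3$ constrains the local behavior of $\bar{\rho}_{E,p}$ at $3$ in a way incompatible with $\bar{\rho}_{f,p}$ for every newform on the short predicted list at level $N$, producing a contradiction that rules out the hypothesized solution. The main obstacle I anticipate is Case B: the large $3$-adic valuation $2pt - 3$ makes the conductor of the associated Frey curve delicate, and one must verify uniformly in $p$ (and in $t$) that the Chen--Siksek inertia comparison yields the required contradiction, rather than checking each prime individually.
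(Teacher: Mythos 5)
Your reduction to a prime $l\equiv -1\pmod 6$ dividing $n$ and your descent are essentially the paper's (in the case $3\nmid v$ the paper simply quotes Bennett--Skinner's theorem that $a^n+b^n=3c^2$ has no nonzero coprime solutions for $n>3$, so only the case $3\mid v$ survives, with $3v=r^l$ and $3u^2-v^2=3s^l$; your worry about the large power of $3$ in ``Case B'' evaporates if one uses the paper's Frey curve $Y^2=X^3+2uX^2+\tfrac{v^2}{3}X$, whose conductor at $3$ is just $3^1$). The genuine gap is in your final step. The ``key idea from \cite{Chen Siksek 09}'' that the paper imports is \emph{quadratic reciprocity over $\Q$}, not a comparison of inertia at $3$, and I do not see how the congruence class of $l$ modulo $3$ could constrain the local behavior of $\rho_l^E$ at $3$ at all: the Frey curve has multiplicative reduction at $3$ for every $l$, the level after Ribet is the same for every $l$, and nothing in the local data at $3$ sees $l\bmod 6$. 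As written, the step that is supposed to produce the contradiction has no mechanism behind it.

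What the paper actually does is the following three-part argument, in which $l\equiv -1\pmod 6$ enters only at the very end and in a purely elementary way. First (Lemma \ref{lem quadratic reciprocity}), from $s-r^2\mid 3(s^l-r^{2l})=3u^2-28v^2$ one gets that every odd prime $q\mid s-r^2$ satisfies $\left(\frac{21}{q}\right)\in\{0,1\}$; quadratic reciprocity plus a $2$-adic and a $3$-adic computation then shows $s-r^2$ is a square modulo $7$. Second (Lemma \ref{lem straightforward}), applying the modular machinery of Lemma \ref{lem Modular approach} at the \emph{auxiliary} prime $p=7$ (not a prime $\equiv 1\pmod l$) forces $\overline{U}=\pm 1$ in $\F_7$ and hence $(s/r^2)^l\equiv \overline{U}^2-\tfrac1{27}\equiv 2\pmod 7$. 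Third, since $\F_7^*$ has order $6$, the hypothesis $l\equiv -1\pmod 6$ makes $x\mapsto x^l$ the inversion map on $\F_7^*$, so $s/r^2\equiv 2^{-1}\equiv 4\pmod 7$ and $s/r^2-1\equiv 3\pmod 7$, contradicting the first step because $3$ is not a square modulo $7$. Your proposal contains none of these three ingredients --- no quadratic reciprocity, no auxiliary-prime computation pinning down $(s/r^2)^l$, and no use of the order of $\F_7^*$ --- so the argument cannot be completed along the lines you sketch.
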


\section{A modular approach to $x^2+y^{2l}=z^3$}\label{sec A modular
approach}

Throughout this section, let $l$ denote a prime $>3$. We shall
explain how modular forms and Galois representations can be used
to study (\ref{eqn 2_2l_3}) for $n=l$. We want to stress that the
methods and results described in this section are not new and can
be found essentially in \cite{Chen 08}. For a general introduction
to using methods of modular forms and Galois representations for
approaching Diophantine equations, one could consult e.g. Chapter
2 of the author's Ph.D. thesis \cite{Dahmen 08}.

\begin{lemma}\label{lem descent}
Suppose that $(x,y,z)$ is a solution to (\ref{eqn 2_2l_3}) for
$n=l$. Then
\begin{equation}\label{eqn descent 1}
(x,y^l,z)=(u(u^2-3 v^2), v (3 u^2-v^2),u^2+v^2)
\end{equation}
for some $u,v \in \Z$ with $\gcd(u,v)=1,\ uv\not=0,\ 3|v$ and
$2|uv$. Furthermore,
\begin{equation}\label{eqn descent 2}
3v=r^l \quad \mathrm{and} \quad 3u^2-v^2=3s^l
\end{equation}
for some $r,s \in \Z$ with $\gcd(r,s)=1$ and $rs \not= 0$.
\end{lemma}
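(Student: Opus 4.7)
My plan is to execute the classical Gaussian-integer descent for $x^2+Y^2=z^3$ specialized to $Y=y^l$, and then to upgrade the parametrization using the $l$-th power structure of $y^l$.

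I would begin by factoring $(x+iy^l)(x-iy^l)=z^3$ in $\Z[i]$. A parity check: $x,y$ are not both odd (otherwise $z^3\equiv 2\pmod 4$, impossible) and not both even (otherwise $\gcd(x,y,z)\neq 1$), so $z$ is odd, and the gcd of the two factors in $\Z[i]$ divides both $2$ and $z^3$, hence is a unit. Since every unit of $\Z[i]$ is a cube, there is a unique $(u,v)\in\Z^2$ with $(u+iv)^3=x+iy^l$, and expansion yields (\ref{eqn descent 1}). Here $\gcd(u,v)=1$ follows from $\gcd(x,y,z)=1$, and $uv\neq 0$ from $xyz\neq 0$. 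For $2\mid uv$: if $u,v$ were both odd, then $z=u^2+v^2$ would be even, contradicting $z$ odd, and if both even, then $\gcd(u,v)\geq 2$; so $u,v$ have opposite parities.

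The subtle assertion is $3\mid v$. I would argue by contradiction. If $3\nmid v$, then $\gcd(v,3u^2-v^2)=\gcd(v,3)=1$, and since $l$ is odd, from $y^l=v(3u^2-v^2)$ as a product of two coprime integers one gets (absorbing signs into $l$-th powers) $v=a^l$ and $3u^2-v^2=b^l$ with $\gcd(a,b)=1$ and $3\nmid ab$. Eliminating $v$ yields the auxiliary generalized Fermat relation $(a^2)^l+b^l=3u^2$. Factoring the left-hand side as $(a^2+b)Q$ with $Q=\sum_{i=0}^{l-1}(-1)^i (a^2)^{l-1-i} b^i$ and reducing mod $3$ gives $3\mid a^2+b$ while $Q\equiv l\not\equiv 0\pmod 3$; a lifting-the-exponent computation at $3$ then pins down $\nu_3(a^2+b)=1+2\nu_3(u)$, forcing $a^2+b$ to have odd $3$-adic valuation. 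Closing out the contradiction at this point is the main obstacle; the natural plan, in the spirit of the paper, is to feed the residual data into the Frey-curve / modular-forms machinery of Section 2 (as in \cite{Chen 08}) to rule out such $(a,b,u)$.

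Once $3\mid v$ is granted, the identity (\ref{eqn descent 2}) is a short $3$-adic bookkeeping step. Write $v=3v'$; then $3u^2-v^2=3(u^2-3v'^2)$ with $\gcd(v',u^2-3v'^2)=1$ (from $\gcd(u,v)=1$) and $3\nmid u^2-3v'^2$ (from $3\nmid u$). Equating $3$-adic valuations in $y^l=9\,v'(u^2-3v'^2)$ gives $l\mid 2+\nu_3(v')$; separating the $3$-part of $v'$ from its prime-to-$3$ part and using the above coprimality, both $3v$ and $u^2-3v'^2$ become $l$-th powers in $\Z$, yielding $3v=r^l$ and $3u^2-v^2=3s^l$. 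The coprimality $\gcd(r,s)=1$ is immediate from $3\mid r$ together with $s^l\equiv u^2\not\equiv 0\pmod 3$, and $rs\neq 0$ follows from $v\neq 0$ and $3u^2\neq v^2$ (equality would force $3\mid u$ and $3\mid v$, contradicting $\gcd(u,v)=1$).
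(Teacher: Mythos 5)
Your Gaussian-integer derivation of (\ref{eqn descent 1}), the parity argument giving $2\mid uv$, and the extraction of (\ref{eqn descent 2}) once $3\mid v$ is granted are all correct and essentially the same as in the paper. The genuine gap is the step $3\mid v$. Assuming $3\nmid v$ you correctly arrive at the ternary equation $(a^2)^l+b^l=3u^2$ in nonzero pairwise coprime integers, but your elementary analysis of it --- factoring the left side as $(a^2+b)Q$ and showing via lifting the exponent that $\nu_3(a^2+b)=1+2\nu_3(u)$ is odd --- produces no contradiction: an odd $3$-adic valuation of $a^2+b$ is perfectly consistent, and you concede yourself that you cannot close the argument. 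Deferring to ``the Frey-curve machinery of Section 2'' does not repair this, since that machinery is built on top of the very lemma being proved (the curve (\ref{eqn Frey Curve}) has $v^2/3$ or $v^2/12$ as a coefficient and presupposes $3\mid v$), so the appeal is circular; what is needed for the branch $3\nmid v$ is a modular attack on the signature $(l,l,2)$ equation $a^l+b^l=3c^2$ itself, with its own Frey curve.

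The paper disposes of this case in one line by citing \cite[Theorem 1.1]{Bennett Skinner 04}, which states that for $n>3$ there are no nonzero pairwise coprime integers $a,b,c$ with $a^n+b^n=3c^2$; applied to $(r^2)^l+s^l=3u^2$ this immediately contradicts $\gcd(v,3u^2-v^2)=1$ and forces $\gcd(v,3u^2-v^2)=3$, i.e.\ $3\mid v$. This deep, modularity-based input is exactly what your proof is missing, and without it (or an equivalent resolution of $a^l+b^l=3c^2$) the claim $3\mid v$ --- and hence the lemma --- is not established. A minor further point: your justification of $\gcd(r,s)=1$ only rules out the common factor $3$; for a prime $q\neq 3$ dividing both $r$ and $s$ you should note that $q\mid v$ and $q\mid 3u^2-v^2$ force $q\mid u$, contradicting $\gcd(u,v)=1$.
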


\begin{proof}
That (\ref{eqn descent 1}) holds for some nonzero coprime integers
$u,v$ follows immediately from the parameterization of solutions
to (\ref{eqn 2_2l_3}) for $n=1$, given in Section \ref{sec
Introduction}.
From $v(3 u^2-v^2)=y^l$ we see that up to primes dividing
$\gcd(v,3u^2-v^2)$, both $v$ and $3u^2-v^2$ must be $l$-th powers.
One easily checks that $\gcd(v,3u^2-v^2)$ equals either 1 or 3.

First suppose that $\gcd(v,3u^2-v^2)=1$. Then $v=r^l$ and
$3u^2-v^2=s^l$ for some nonzero $r,s \in \Z$. Furthermore, $r,s$
and $u$ are pairwise coprime and $(r^2)^l+s^l=3u^2$. However,
\cite[Theorem 1.1]{Bennett Skinner 04} tells us that for an
integer $n>3$ there are no nonzero (pairwise) coprime integers
$a,b,c$ satisfying $a^n+b^n=3c^2$, which contradicts
$\gcd(v,3u^2-v^2)=1$.
So suppose now that $\gcd(v,3u^2-v^2)=3$. Then $3 |v$ and of
course $3 \nmid u$, so $3||3u^2-v^2$. We get that $3v=r^l$ and
$3u^2-v^2=3s^l$ for some nonzero $r,s \in \Z$.
Finally, if both $u,v$ are odd, then $v (3u^2-v^2) \equiv 2
\pmod{4}$, but this contradicts that $v (3u^2-v^2)$ is an $l$-th
power.
\end{proof}

We have reduced (\ref{eqn 2_2l_3}) for $n=l$ to the equation $v (3
u^2-v^2)=y^l$ with $u,v,y$ as in the lemma above. This equation
can be approached using methods of modular forms and Galois
representations. In fact, one can use almost exactly the same
methods as used in \cite{Kraus 98} for studying the equation
$a^3+b^3=c^l$ in nonzero coprime integers $a,b,c$.

Suppose $(x,y,z)$ is a solution to (\ref{eqn 2_2l_3}) for $n=l$,
let $u,v,r,s$ be as in Lemma \ref{lem descent} and consider the
following Frey curve associated to this solution
\begin{equation}\label{eqn Frey Curve}
E: Y^2=\left\{
\begin{array}{ll}
X^3+2uX^2+\frac{v^2}{3}X & \mbox{if $u$ is even}; \\
X^3 \pm uX^2+\frac{v^2}{12}X,\ \pm u \equiv 1 \pmod{4} & \mbox{if
$u$ is odd}.
\end{array}
\right.
\end{equation}
Because $3|v$ and $2|v$ if $u$ is odd, we see that the model for
$E$ above actually has coefficients in $\Z$. Using Tate's
algorithm, or the in practice very handy to use \cite{Papadopoulos
93}, one finds that the model for $E$ is minimal at every prime
$p$, except at $p=2$ when $u$ is odd. Furthermore, the conductor
$N$ and the minimal discriminant $\Delta$ of $E$ satisfy (see also
\cite[Propositio 8]{Chen 08} and \cite[Lemma 2.1]{Bennett Skinner
04})
\begin{align*}
\Delta = & 2^{\alpha} \cdot 3^{-3} v^4(3u^2-v^2)=2^{\alpha} \cdot
3^{-6}
\left(r^4s\right)^l, \quad \alpha:=6 \mathrm{\ if\ } 2|u, \quad \alpha:=-12 \mathrm{\ if\ } 2\nmid u\\
N = & 2^{\beta} \cdot 3\, \rad_{\{2,3\}}(\Delta)=2^{\beta} \cdot
3\, \rad_{\{2,3\}}(rs), \quad \beta:=5 \mathrm{\ if\ } 2|u, \quad
\beta:=1 \mathrm{\ if\ } 2\nmid u,
\end{align*}
where for a finite set of primes $S$ and a nonzero $n \in \Z$,
$\rad_S(n)$ denotes the product of prime $p$ with $p|n$ and $p
\not\in S$.

\begin{remark}
From $x^2-z^3=-y^{2l}$, an obvious choice, up to quadratic twist,
for a Frey curve associated to the solution $(x,y,z)$ would be
\[E': Y^2=X^3-3zX+2x.\]
This model has (not necessarily minimal) discriminant $-2^6 \cdot
3^3(x^2-z^3)=2^6 \cdot 3^3 y^{2l}$, which has no primes $>3$ in
common with $c_4=2^4 \cdot 3^2 z$. Using (\ref{eqn descent 1}), we
can write $E'$ as
\[E': Y^2=X^3-3(u^2+v^2)X+2u(u^2-3 v^2),\]
from which we obtain that $E'$ has a rational $2$-torsion point
because the right-hand side factors as $(X+2 u) (X^2 - 2 u X + u^2
- 3 v^2 )$. The $2$-isogenous elliptic curve associates to the
$2$-torsion point $(X,Y)=(-2u,0)$ is simply, up to quadratic
twist, the Frey curve $E$ above. For both theory and practical
computation, it makes no difference whether $E$ or $E'$ is used,
except for establishing irreducibility of the mod-$5$
representation, where the computations actually become cleaner if
one uses $E'$ instead of $E$. The only reason for introducing $E$
above, is to follow \cite{Chen 08} more closely.
\end{remark}

For an elliptic curve $\E$ over $\Q$, let $\rho_l^{\E}:
\mathrm{Gal}(\overline{\Q}/\Q) \to \mathrm{Gl}_2(\F_l)$ denote the
standard $2$-dimensional mod-$l$ Galois representation associated
to $\E$ (induced by the natural action of
$\mathrm{Gal}(\overline{\Q}/\Q)$ on the $l$-torsion points $E[l]$
of $\E$). Suppose that $\rho_l^E$ is irreducible. Then by
modularity \cite{BCDT 01} and level lowering \cite{Ribet 90} (note
that $\rho_l^E$ is finite at $l$), we obtain that $\rho_l^E$ is
modular of level $N_0:=2^{\beta} \cdot 3$, weight $2$ and trivial
character. This means that $\rho_l^E \simeq \rho_l^f$ for some
(normalized) newform $f \in S^2(\Gamma_0(N_0))$, where of course
$\rho_l^f$ denotes the standard $2$-dimensional mod-$l$ Galois
representation associated to $f$. If $u$ is odd, then $N_0=6$ and
since there are no newforms in $S^2(\Gamma_0(6))$ we have reached
a contradiction in this case. If $u$ is even, then $N_0=96$ and
$S^2(\Gamma_0(96))$ contains $2$ newforms, both rational and
quadratic twists over $\Q(i)$ of each other. So $\rho_l^E \simeq
\rho_l^f \simeq \rho_l^{E_0}$ for some elliptic curve $E_0$ over
$\Q$ with conductor $N_0=96$. As is well-known, we get from
$\rho_l^E \simeq \rho_l^{E_0}$ by comparing traces of Frobenius,
that for primes $p$
\[a_p(E_0) \equiv a_p(E) \pmod{l} \quad \mathrm{if\ } p \nmid N\]
\[a_p(E_0) \equiv a_p(E) (1+p) \equiv \pm (1+p) \pmod{l} \quad \mathrm{if\ } p | N \mathrm{\ and\ } p\nmid N_0=2^5 \cdot 3.\]
Note that we explicitly know $E_0$ up to isogeny and quadratic
twist. So if $E_0'$ is any elliptic curve over $\Q$ with conductor
$96$, e.g. given by $Y^2=X^3\pm X^2-2X$, then $a_p(E_0)^2 =
a_p(E_0')^2$ for all primes $p$. Therefore, for every prime $p$,
we can effectively compute the uniquely determined value
$a_p(E_0)^2$. We summarize some of the Diophantine information
obtained so far.
\begin{lemma}\label{lem Modular approach}
Suppose that $(x,y,z)$ is a solution to (\ref{eqn 2_2l_3}) for
$n=l$, let $u,v$ be as in Lemma \ref{lem descent}, let the
elliptic curve $E$ be given by (\ref{eqn Frey Curve}), let $E_0$
be any elliptic curve over $\Q$ with conductor $96$ and let $p>3$
be prime. If $\rho_l^E$ is irreducible and $a_p(E_0)^2 \not \equiv
(p+1)^2 \pmod{l}$, then $p\nmid v(3u^2-v^2)$ and $a_p(E_0)^2
\equiv a_p(E)^2 \pmod{l}$.
\end{lemma}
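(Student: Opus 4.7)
The plan is to package the modularity-and-level-lowering discussion that precedes the lemma into a clean Frobenius comparison. I would first invoke modularity \cite{BCDT 01} and Ribet's level lowering \cite{Ribet 90}: the irreducibility of $\rho_l^E$ is exactly the hypothesis needed, and the finiteness of $\rho_l^E$ at $l$ (already recorded above) is what allows $l$ to be removed from the Serre level. This yields $\rho_l^E \simeq \rho_l^f$ for some newform $f \in S^2(\Gamma_0(N_0))$ with $N_0 = 2^{\beta}\cdot 3$. If $u$ is odd then $N_0 = 6$ and $S^2(\Gamma_0(6)) = 0$, a contradiction; hence the hypotheses force $u$ even and $N_0 = 96$, in which case $f$ comes (via Eichler-Shimura) from some elliptic curve $E_1/\Q$ of conductor $96$, giving $\rho_l^E \simeq \rho_l^{E_1}$. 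The already noted identity $a_p(E_1)^2 = a_p(E_0)^2$, valid for any two conductor-$96$ elliptic curves over $\Q$, then lets me translate statements about $E_1$ into statements about the $E_0$ of the lemma after squaring.

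Next I compare Frobenius traces at $p$. Because $p > 3$, $p \nmid N_0$, so $\rho_l^{E_1}$ is unramified at $p$ and $\mathrm{tr}\,\rho_l^{E_1}(\mathrm{Frob}_p) \equiv a_p(E_1) \pmod{l}$. If $p \nmid N$, then $E$ also has good reduction at $p$ and matching traces in $\rho_l^E \simeq \rho_l^{E_1}$ gives $a_p(E) \equiv a_p(E_1) \pmod{l}$. If instead $p \mid N$, then the conductor formula forces the $p$-exponent of $N$ to be $1$, so $E$ has multiplicative reduction at $p$; the standard formula for traces in that case yields $\mathrm{tr}\,\rho_l^E(\mathrm{Frob}_p) \equiv \pm(p+1) \pmod{l}$, hence $a_p(E_1) \equiv \pm(p+1) \pmod{l}$ and squaring gives $a_p(E_0)^2 \equiv (p+1)^2 \pmod{l}$.

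Contraposing: under the hypothesis $a_p(E_0)^2 \not\equiv (p+1)^2 \pmod{l}$, the second case is excluded, so $p \nmid N$. From $N = 2^{\beta}\cdot 3 \cdot \rad_{\{2,3\}}(rs)$ combined with $3v = r^l$ and $3u^2 - v^2 = 3s^l$, the primes $>3$ dividing $N$ are precisely the primes $>3$ dividing $v(3u^2 - v^2)$; thus $p \nmid v(3u^2 - v^2)$. Squaring the good-reduction congruence $a_p(E) \equiv a_p(E_1) \pmod{l}$ then yields $a_p(E_0)^2 \equiv a_p(E)^2 \pmod{l}$, as required. The whole argument is essentially bookkeeping; the only point that needs a moment's care is verifying that the reduction of $E$ at primes $p > 3$ dividing $N$ is multiplicative rather than additive, which is immediate from the conductor exponent at such $p$ being exactly $1$.
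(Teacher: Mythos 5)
Your proposal is correct and follows essentially the same route as the paper: the lemma is in fact just a summary of the discussion immediately preceding it, which uses modularity and level lowering to get $\rho_l^E \simeq \rho_l^{E_0}$ for a conductor-$96$ curve (after killing the $u$ odd case via the absence of newforms at level $6$), then compares traces of Frobenius at $p$ in the good and multiplicative cases and contraposes. Your only additions — carefully distinguishing the curve $E_1$ attached to the newform from the arbitrary conductor-$96$ curve $E_0$, and spelling out why primes $p>3$ dividing $N$ are exactly those dividing $v(3u^2-v^2)$ — are correct tightenings of what the paper leaves implicit.
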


We see that in order to obtain a contradiction and conclude that
(\ref{eqn 2_2l_3}) has no solutions for $n=l$, it suffices (still
assuming the irreducibility of $\rho_l^E$) to find a prime $p>3$
such that $a_p(E_0)^2 \not\equiv (p+1)^2 \pmod{l}$ and $a_p(E_0)^2
\not\equiv a_p(E)^2 \pmod{l}$. A priori possible values of
$a_p(E)$ can be obtained by plugging all values of $u,v \pmod{p}$
with $p\nmid v(3u^2-v^2)$ into the equation for $E$. However, this
will never lead to a contradiction, since plugging the values
$u,v$ with $(|u|,|v|)=(2,3)$ into the equation for $E$, will give
us elliptic curves with conductor $96$. Now by reducing (\ref{eqn
descent 2}) modulo $p$, one obtains extra information on $u,v
\pmod{p}$ whenever one knows that the nonzero $l$-th powers modulo
$p$ are strictly contained in $\F_p^*$. This happens exactly when
$p \equiv 1 \pmod{l}$, say $p=kl+1$, in which case the nonzero
$l$-th powers modulo $p$ are given by
\[\mu_k(\F_p):=\set{\zeta \in \F_p}{\zeta^k=1}.\]
Heuristically speaking, the bigger $k$ is, the more possibilities
for $u,v \pmod{p}$ we expect, hence the more a priori
possibilities for $a_p(E)$ we expect, hence the less likely it is
to conclude that $a_p(E)^2 \not\equiv a_p(E_0)^2 \pmod{l}$. From a
computational point of view, it is desirable to only consider the
possibilities for $u/v \pmod{p}$, this indeed suffices, since $u/v
\pmod{p}$ determines the reduction of $E$ modulo $p$, denoted
$\tilde{E}(\F_p)$, up to quadratic twist and hence determines
$a_p(E)^2$ uniquely.

Continuing our more formal discussion, let indeed $k \in \N$ be
such that $p:=kl+1$ is prime (so $(p+1)^2 \equiv 4 \pmod{l}$) and
suppose that $a_p(E_0)^2 \not\equiv 4 \pmod{l}$. Then in
particular $p\nmid v(3u^2-v^2)=(rs)^l$. Define $U:=u/(3v)$. From
$3v=r^l$ and $3u^2-v^2=3s^l$, we get $U^2=1/(27)+(s/r^2)^l$. Since
$p \nmid rs$, we obtain for the reduction of $U$ modulo $p$,
denoted $\overline{U}$, that
\begin{equation}\label{eqn Skp}
\overline{U} \in S_{k,p}:=\set{\alpha \in
\F_p}{\alpha^2-\frac{1}{27}\in \mu_k(\F_p)}.
\end{equation}
For $\alpha \in S_{k,p}$ consider the elliptic curve over $\F_p$
given by
\begin{equation}\label{eqn Ealpha}
E_{\alpha}: Y^2=X^3+2\alpha X^2+\frac{1}{27}X.
\end{equation}
Note that $E_{\overline{U}}$ is a quadratic twist of
$\tilde{E}(\F_p)$, so that $a_p(E_{\overline{U}})^2 = a_p(E)^2$.
If now for all $\alpha \in S_{k,p}$ we have $a_p(E_{\alpha})^2
\not\equiv a_p(E_0)^2 \pmod{l}$, then $a_p(E)^2  \equiv
a_p(E_{\overline{U}})^2 \not\equiv a_p(E_0)^2 \pmod{l}$, a
contradiction which implies that (\ref{eqn 2_2l_3}) has no
solutions for $n=l$. Note that we have assumed $\rho_l^E$ to be
irreducible; one readily obtains (see Section \ref{sec
Irreducibility}) from \cite{Mazur 78} and \cite{Mazur Velu 72}
that this is actually the case if $l>7$. This proves the main
theorem of \cite{Chen 08}.

\begin{theorem}[{\cite[Theorem 1]{Chen 08}}]\label{thm Chen}
Let $l>7$ be prime and let $E_0$ be any elliptic curve over $\Q$
with conductor $96$. If there exists a $k \in \Z_{>0}$ such that
the following three conditions hold
\begin{enumerate}
\item $p:=kl+1$ is prime \item\label{item 2} $a_p(E_0)^2
\not\equiv 4 \pmod{l}$ \item\label{item 3} $a_p(E_{\alpha})^2
\not\equiv a_p(E_0)^2 \pmod{l}$ for all $\alpha \in S_{k,p}$,\\
where $S_{k,p}$ and $E_{\alpha}$ are given by (\ref{eqn Skp}) and
(\ref{eqn Ealpha}) respectively,
\end{enumerate}
then (\ref{eqn 2_2l_3}) has no solutions for $n=l$.
\end{theorem}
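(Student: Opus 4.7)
My plan is to argue by contradiction: assume that $(x,y,z)$ is a solution to (\ref{eqn 2_2l_3}) for $n=l$ and extract a contradiction from the three hypotheses. First I would invoke Lemma \ref{lem descent} to produce integers $u,v,r,s$ satisfying (\ref{eqn descent 1}) and (\ref{eqn descent 2}), and attach to the solution the Frey curve $E$ of (\ref{eqn Frey Curve}). Since $l>7$, the representation $\rho_l^E$ is irreducible (this is the statement that will be deduced in Section \ref{sec Irreducibility} from the results of Mazur), so Lemma \ref{lem Modular approach} is available.

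Next I would set $p := kl+1$, which is prime by hypothesis (1), and observe $p+1\equiv 2\pmod{l}$, hence $(p+1)^2\equiv 4\pmod{l}$. Condition (2) therefore says exactly $a_p(E_0)^2\not\equiv (p+1)^2\pmod{l}$, which is the input required by Lemma \ref{lem Modular approach}. The conclusion of that lemma yields both $p\nmid v(3u^2-v^2)$ and the congruence
\[ a_p(E_0)^2 \equiv a_p(E)^2 \pmod{l}. \]

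The remaining work is to transfer this congruence into one involving a curve $E_\alpha$ with $\alpha\in S_{k,p}$, so as to contradict condition (3). Set $U := u/(3v)\in\F_p$ (legitimate since $p\nmid v$). Combining $3v=r^l$ with $3u^2-v^2=3s^l$ gives $U^2-\frac{1}{27}=(s/r^2)^l$, and since $p\nmid rs$ this reduces to a nonzero $l$-th power in $\F_p^*$, i.e.\ an element of $\mu_k(\F_p)$. Thus $\overline U\in S_{k,p}$ as defined in (\ref{eqn Skp}). Comparing the model (\ref{eqn Frey Curve}) with the definition (\ref{eqn Ealpha}) and rescaling $X\mapsto 3vX$, $Y\mapsto (3v)^{3/2}Y$, one sees that $E_{\overline U}$ is a quadratic twist of the reduction $\tilde E(\F_p)$; quadratic twisting changes $a_p$ only by a sign, so $a_p(E_{\overline U})^2 = a_p(E)^2 \equiv a_p(E_0)^2\pmod{l}$. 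This directly contradicts condition (3) applied at $\alpha=\overline U$, completing the proof.

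The only genuinely delicate ingredient is the irreducibility of $\rho_l^E$ for $l>7$; granting this, the rest is a routine assembly of Lemmas \ref{lem descent} and \ref{lem Modular approach} with two elementary observations, namely that the defining equations (\ref{eqn descent 2}) force $\overline U\in S_{k,p}$, and that the curve $E_{\overline U}$ from (\ref{eqn Ealpha}) is a twist of $\tilde E(\F_p)$. No further hypothesis on $u$ being even or odd enters, because (\ref{eqn Ealpha}) already encodes the universal shape of $E$ modulo $p$ after dividing through by $3v$.
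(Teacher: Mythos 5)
Your proposal is correct and follows essentially the same route as the paper: reduce via Lemma \ref{lem descent}, use irreducibility for $l>7$ together with Lemma \ref{lem Modular approach} (noting $(p+1)^2\equiv 4\pmod{l}$), place $\overline{U}=\overline{u/(3v)}$ in $S_{k,p}$ via (\ref{eqn descent 2}), and identify $E_{\overline{U}}$ as a quadratic twist of the reduction of $E$ to contradict condition (3). The paper packages the modularity/level-lowering step (including the immediate contradiction when $u$ is odd, since $S^2(\Gamma_0(6))$ has no newforms) into Lemma \ref{lem Modular approach} exactly as you use it.
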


According to \cite{Chen 08}, it has been computationally verified
that for every prime $l$ with $7<l<10^7$ and $l\not=31$ there
exists a $k\in \Z_{>0}$ satisfying the three conditions of Theorem
\ref{thm Chen}, hence (\ref{eqn 2_2l_3}) has no solutions for
$n=l$ if $l$ equals one of these values.

We like to take this opportunity to point out a small omission in
the proof of \cite[Corollary 3]{Chen 08}. This corollary to
Theorem \ref{thm Chen} states that if $l>7$ is a Sophie Germain
prime, i.e. $p:=2l+1$ is prime, $\left(\frac{p}{7}\right)=1$ and
$\left(\frac{p}{13}\right)=(-1)^{(l+1)/2}$, then (\ref{eqn
2_2l_3}) has no solutions for $n=l$. As pointed out in \cite{Chen
08}, the conditions for the Legendre symbols are equivalent (by
quadratic reciprocity) to $S_{2,p}$ being empty, in which case
condition \ref{item 3} of Theorem \ref{thm Chen} trivially holds.
However, in order to use Theorem \ref{thm Chen} to deduce that
(\ref{eqn 2_2l_3}) has no solutions, we need of course prove that
condition \ref{item 2} also holds. This can be done as follows.
Note that both isogeny classes of elliptic curves over $\Q$ with
conductor $96$ contain an elliptic curve with rational torsion
group of order $4$. So $4|p+1-a_p(E_0)$, and hence $4|a_p(E_0)$.
This implies that if $a_p(E_0) \equiv \pm 2 \pmod{l}$, then
$|a_p(E_0) \mp 2| \geq 2l$. That this last inequality cannot hold,
follows directly from the Hasse bound $|a_p(E_0)| \leq 2 \sqrt{p}$
together with $l>3$, which completes the proof of the corollary.

Note that the corollary and its proof are analogously to work in
\cite{Kraus 98}, namely Corollaire 3.2 and its proof.

\section{Refinements}\label{sec Refinements}

In this section we shall prove our main result, Theorem \ref{thm
main}.

\subsection{Irreducibility.}\label{sec Irreducibility}

As pointed out in \cite{Chen 08}, since $E$ has at least one odd
prime of multiplicative reduction and a rational point of order
$2$, the irreducibility of $\rho_l^E$ for primes $l>7$ follows
from \cite[Corollary 4.4]{Mazur 78} and \cite{Mazur Velu 72}. In
fact, one does not need the first mentioned property of $E$, but
only that $E$ has a rational point of order $2$ to arrive at the
desired conclusion, since it is well-known that the modular curves
$X_0(2l)$ for primes $l>7$ have no noncuspidal rational points.
Irreducibility of $\rho_l^E$ for the remaining primes $l>3$ can
also be obtained.

\begin{theorem}\label{thm Irreducibility}
Let $l>3$ be prime and consider the elliptic curve $E$ given by
(\ref{eqn Frey Curve}), where $u,v \in \Z$ with
$v(3u^2-v^2)\not=0$. Then $\rho_l^E$ is irreducible.
\end{theorem}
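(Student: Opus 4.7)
The paragraph immediately preceding the theorem disposes of $l \geq 11$: combining the rational $2$-torsion on $E$ with a hypothetical rational $l$-isogeny yields a rational cyclic $2l$-isogeny, hence a noncuspidal rational point of $X_0(2l)$, contradicting Mazur. So it remains to handle $l \in \{5, 7\}$.

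For $l = 7$ I would push the same argument one step further. The modular curve $X_0(14)$ has genus one and a finite Mordell--Weil group; its noncuspidal rational points are two well-known CM points with specific $j$-invariants $j_0$. It then suffices to write $j(E)$ as an explicit rational function of $u, v$ using (\ref{eqn Frey Curve}) and to show that $j(E) = j_0$ admits no integer solutions with $v(3u^2-v^2) \neq 0$. The resulting equation defines a small curve in $(u,v)$ that I expect to handle by factoring the numerator/denominator as products of binary forms and performing a small descent.

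For $l = 5$ the modular-curves shortcut alone is insufficient, since $X_0(10)$ has genus zero and infinitely many rational points. Here, following the earlier remark, I would switch to the $2$-isogenous Frey curve $E'$, for which $\rho_5^{E'}$ is reducible if and only if $\rho_5^E$ is. Assume $\rho_5^{E'}$ reducible with subcharacter $\chi$ (so the quotient character equals $\chi_5 \chi^{-1}$, where $\chi_5$ denotes the mod-$5$ cyclotomic character), and analyse the ramification of $\chi$: at primes $p \geq 5$ of multiplicative reduction of $E'$ with $l \mid v_p(\Delta_{E'})$, $\chi$ is forced to be unramified, so its conductor divides a power of $2 \cdot 3 \cdot 5$, and its values lie in the cyclic group $\F_5^*$ of order four. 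This confines $\chi$ to a short explicit list of Dirichlet characters. For each candidate I would derive a contradiction by computing $a_p(E') \pmod 5$ at a well-chosen small prime $p$ of good reduction, letting $(u,v)$ range over all admissible residues modulo $p$, and comparing with the prediction $\chi(p) + p\,\chi^{-1}(p) \pmod 5$.

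\textbf{Main obstacle.} The $l = 5$ case is the hardest. The central technical point is the local analysis of $E'$ at $2$ and $3$: the minimal model of $E$ (hence of $E'$) depends on the parity of $u$, and the divisibility $3 \mid v$ (in fact $v_3(v) \geq l - 1$, since $3 \mid v$ forces $3 \mid r$ in (\ref{eqn descent 2})) pins down the reduction type at $3$. Once the finite list of possible $\chi$ is in hand the elimination should be mechanical, but the bookkeeping of which of $\chi$ and $\chi_5\chi^{-1}$ carries each local ramification is the step most prone to error.
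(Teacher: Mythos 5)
Your treatment of $l\geq 11$ (via $X_0(2l)$) and of $l=7$ (via the two noncuspidal rational points of $X_0(14)$ and checking that $j(u,v)=j_{14}$ has no admissible solutions) coincides with the paper's proof. The gap is in the case $l=5$, where you replace the modular-curve computation by an isogeny-character argument; as proposed, this does not work. First, the theorem is stated for arbitrary $u,v\in\Z$ with $v(3u^2-v^2)\neq 0$: there is no coprimality hypothesis, no $3\mid v$, and no condition (\ref{eqn descent 2}), so $E'$ may have additive reduction at arbitrarily many primes and the subcharacter $\chi$ is not confined to characters of conductor supported on $\{2,3,5\}$. Your argument would at best prove the weaker statement in which $u,v$ come from Lemma \ref{lem descent} (enough for the paper's applications, but not the theorem as stated).

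Second, and more seriously, the proposed elimination step cannot close even under those extra hypotheses. Knowing $u/v \bmod p$ determines the reduction of $E'$ only up to quadratic twist, hence determines $a_p(E')^2$ but not the sign of $a_p(E')$; and already at $p=7$ the values $a_7(E_\alpha)^2\in\{0,4,16\}$ realized by the family cover every residue class of $a_7$ modulo $5$, so for any fixed candidate $\chi$ and any small prime $p$ some admissible residue $(u,v)\bmod p$ will match the prediction $\chi(p)+p\,\chi(p)^{-1}$. The underlying obstruction is that $X_0(10)$ has genus $0$ with infinitely many noncuspidal rational points: there genuinely exist elliptic curves with rational $2$-torsion, a rational $5$-isogeny, \emph{and} isogeny character of small conductor, so no amount of local character bookkeeping can rule them out. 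The actual content of the theorem for $l=5$ is that no such curve has $j$-invariant of the shape $j'(u,v)$, i.e.\ a statement about rational points on the fibre product of the $j$-map of $X_0(5)$ with the $j$-map of the family $E'$. The paper settles exactly this global step: comparing $j-1728$ on both sides yields a curve $C\subset\PP^1\times\PP^1$ covering the elliptic curve $1728Y^2=(X^2+22X+125)X$, which has rank $0$ and torsion of order $2$, whence the only rational points of $C$ lie over the cusps of $X_0(5)$. Some such global Diophantine argument is indispensable, and it --- not the local analysis at $2$ and $3$ that you flag as the main obstacle --- is the real difficulty in the $l=5$ case.
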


\begin{proof}
The only cases left to deal with are $l=7$ and $l=5$.

The modular curve $X_0(14)$ is of genus one, it has $2$
noncuspidal rational points, which correspond to elliptic curves
with $j$-invariant $j_{14}$ equal to $-3^3 \cdot 5^3$ or $3^3
\cdot 5^3\cdot 17^3$; this readily follows from \cite[Chapter
5]{Ligozat}. Denote the $j$-invariant of $E$ by $j(u,v)$. For both
values of $j_{14}$ it is completely straightforward to check that
the equation $j(u,v)=j_{14}$ has no solutions with $u,v \in \Z$
and $(3u^2-v^2)v\not=0$. This proves that $\rho_l^E$ is
irreducible for $l=7$.

Since the modular curve $X_0(10)$ is of genus zero and has a
rational cusp (which is nonsingular of course), it has infinitely
many noncuspidal rational points. So we have to work a little
harder to obtain the irreducibility of $\rho_l^E$ for $l=5$. Up to
quadratic twist, the Frey curve $E$ is $2$-isogenous to
\[E': Y^2=X^3-6uX^2+3(3u^2-v^2)X\]
(replacing $X$ by $X+2u$, gives us back the model for $E'$ given
in Section \ref{sec A modular approach}). So $\rho_5^E$ is
irreducible if and only if $\rho_5^{E'}$ is irreducible. The
$j$-invariant $j'(u,v)$ of $E'$ (which is a twist of the $j$-map
from $X(2)$ to $X(1)$) is given by
\begin{eqnarray*}
j'(u,v) & = & \frac{1728(u^2 + v^2)^3}{v^2(3u^2 - v^2)^2}\\
 & = & \frac{1728 u^2 (u^2 - 3v^2)^2}{v^2(3 u^2 - v^2)^2}+1728.
 \end{eqnarray*}
The $j$-map from the modular curve $X_0(5)$ to $X(1)$, denoted
$j_5$, is given by
\begin{eqnarray*}
j_5(s,t) & = & \frac{(t^2+10st+5s^2)^3}{s^5 t} \\
 & = & \frac{(t^2+4st-s^2)^2(t^2+22st+125s^2)}{s^5t}+1728.
\end{eqnarray*}
By comparing $j-1728$, we see that every $[u:v] \in \PP^1(\Q)$
such that $\rho_5^{E'}$ is irreducible gives rise to a rational
point on the curve $C$ in $\PP^1 \times \PP^1$ give by
\[
C: \frac{1728 u^2 (u^2 - 3v^2)^2}{v^2(3 u^2 -
v^2)^2}=\frac{(t^2+4st-s^2)^2(t^2+22st+125s^2)}{s^5t}.
\]
Letting
\[
X:=\frac{t}{s},\quad Y:=\frac{u(u^2-3v^2)}{v(3u^2-v^2)} \cdot
\frac{st}{t^2+4st-s^2}
\]
defines a covering by $C$ of the elliptic curve over $\Q$ given by
\[1728Y^2=(X^2+22X+125)X.\]
By checking that this elliptic curve has rank $0$ and torsion
subgroup of order $2$, we get that the only rational points on $C$
are those with $[s:t]=[1:0]$ or $[s:t]=[0:1]$. For these values of
$[s:t]$ we have $j_5(s,t)=\infty$, so they correspond to cusps on
$X_0(5)$. We conclude that $\rho_l^{E'}$, and hence $\rho_l^E$, is
irreducible for $l=5$. This completes the proof.
\end{proof}

This irreducibility result obviously leads to the following
strengthening of Theorem \ref{thm Chen}.

\begin{theorem}\label{thm Chen refined l>3}
Theorem \ref{thm Chen} holds true with the condition $l>7$
replaced by $l>3$.
\end{theorem}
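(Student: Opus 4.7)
The plan is simply to observe that the derivation of Theorem \ref{thm Chen} carried out in Section \ref{sec A modular approach} invokes the hypothesis $l>7$ in exactly one place: when the irreducibility of $\rho_l^E$ is cited from \cite{Mazur 78} and \cite{Mazur Velu 72}. Every other ingredient---the descent in Lemma \ref{lem descent}, the Frey curve construction (\ref{eqn Frey Curve}), the discriminant/conductor computation, the appeal to modularity \cite{BCDT 01} and Ribet's level-lowering \cite{Ribet 90} producing either a newform in $S^2(\Gamma_0(6))$ (empty, contradiction) or in $S^2(\Gamma_0(96))$, the comparison of traces of Frobenius in Lemma \ref{lem Modular approach}, and the reduction to the set $S_{k,p}$ of equation (\ref{eqn Skp})---uses nothing about $l$ beyond $l>3$. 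So I would simply replace the Mazur step by Theorem \ref{thm Irreducibility}, which supplies irreducibility for every prime $l>3$, and rerun the same argument verbatim.

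It is worth checking that level lowering actually applies at the new cases $l=5,7$. For any prime $p \mid rs$ with $p \notin \{2,3\}$, the Frey curve $E$ has multiplicative reduction at $p$, and from the formula $\Delta = 2^{\alpha} \cdot 3^{-6}(r^4 s)^l$ one has $v_p(\Delta) \equiv 0 \pmod l$; hence Ribet's theorem strips these primes from the conductor. At $l$ itself, either $l \nmid rs$ (in which case $E$ has good reduction at $l$, since $N$ is only divisible by $2$ and $3$) or $l \mid rs$ and $l \mid v_l(\Delta)$, so $\rho_l^E$ is finite flat at $l$. Thus the level obtained after lowering is indeed $N_0 = 2^\beta \cdot 3$, exactly as in Section \ref{sec A modular approach}.

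I do not anticipate any serious obstacle: Theorem \ref{thm Chen refined l>3} is a formal strengthening of Theorem \ref{thm Chen} obtained by feeding the improved irreducibility Theorem \ref{thm Irreducibility} into the argument already present in Section \ref{sec A modular approach}. The only reason for stating it as a separate theorem is that it is the form in which the criterion will be applied, in the subsequent sections, to the small cases $l=5$ and $l=7$.
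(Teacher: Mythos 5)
Your proposal is correct and follows exactly the paper's route: the paper proves Theorem \ref{thm Irreducibility} and then notes that irreducibility of $\rho_l^E$ was the only place the hypothesis $l>7$ entered the derivation of Theorem \ref{thm Chen}, so the same argument gives the strengthened statement for all $l>3$. Your additional verification that level lowering still applies at $l=5,7$ (finiteness of $\rho_l^E$ at $l$) is a sensible sanity check that the paper handles only with a brief parenthetical remark.
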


\subsubsection{$l=5$.}

In order to show that (\ref{eqn 2_2l_3}) has no solutions for
$n=l:=5$, it suffices by Theorem \ref{thm Chen refined l>3} to
show that there exists a $k \in \N$ satisfying the three
conditions of Theorem \ref{thm Chen}. Let $k:=2$, then
$p:=kl+1=11$ is prime, $a_p(E_0)^2 \equiv 1 \not\equiv 4 \pmod{l}$
and $S_{2,p}$ is empty because $1/27\pm 1$ are not squares in
$\F_p$. So the conditions are satisfied for $k=2$ and we conclude
that this proves Theorem \ref{thm main} in case $n=l=5$.

\begin{remark}
By our irreducibility result, Theorem \ref{thm Irreducibility},
the corollary mentioned at the end of Section \ref{sec A modular
approach} also holds with the condition $l>7$ replaced by $l>3$
(the case $l=7$ is in fact trivial, because $7$ is not a Sophie
Germain prime). So for obtaining Theorem \ref{thm main} in case
$n=l:=5$ it sufficed to check that $\left(\frac{p}{7}\right)=1$
and $\left(\frac{p}{13}\right)=(-1)^{(l+1)/2}$ for $p=11$.
\end{remark}

\subsection{Using more local information.}

For $l:=31$ there is no $k \in \Z_{>0}$ known which satisfies the
three conditions of Theorem \ref{thm Chen} (it seems in fact very
unlikely that such a $k$ exists), so that we cannot use this
theorem to prove the nonexistence of solutions of (\ref{eqn
2_2l_3}) for $n=l=31$. This is not too surprising, since, loosely
speaking, $l=31$ is very far from being a Sophie Germain prime in
the sense that the smallest $k \in \Z_{>0}$ such that $kl+1$ is
prime, which is $k=10$, is not so small compared to $l=31$.

Now let $l>3$ be prime, suppose that $(x,y,z)$ is a solution to
(\ref{eqn 2_2l_3}) for $n=l$ and let $u,v,r,s$ be as in Lemma
\ref{lem descent}. The idea is to factor the left-hand side of
$3u^2-v^2=3s^l$ over the ring of integers $R:=\Z[\sqrt{3}]$ in
order to obtain more local information on $u,v$, which, together
with Lemma \ref{lem Modular approach} should lead to a
contradiction. We claim that
\begin{equation}\label{eqn descent 3}
3v=r^l, \quad \sqrt{3}u-v=\sqrt{3}x_1^l \epsilon \quad
\mathrm{and} \quad \sqrt{3}u+v=\sqrt{3}x_2^l \epsilon^{-1}
\end{equation}
for some nonzero $x_1,x_2 \in R$ and unit $\epsilon \in R^*$. Note
that $R$ has class number one. Using $\gcd(u,v)=1$, $2|uv$ and
$3|v$ we readily get $(\sqrt{3}u-v,\sqrt{3}u+v)=(\sqrt{3})$ and
$\sqrt{3}||\sqrt{3}u \pm v$. From
$(\sqrt{3}u-v)(\sqrt{3}u+v)=3s^l$ the claim now follows.

As before, let $E$ be given by (\ref{eqn Frey Curve}), let $E_0$
be any elliptic curves over $\Q$ with conductor $96$, let $k \in
\N$ be such that $p:=kl+1$ is prime and suppose that $a_p(E_0)^2
\not\equiv 4 \pmod{l}$. Note that by Theorem \ref{thm
Irreducibility} we get that $\rho_l^E$ is irreducible. Again, by
Lemma \ref{lem Modular approach} we get $p\nmid
v(3u^2-v^2)=(rs)^l$ and we see that if we can show that
$a_p(E_0)^2 \not\equiv a_p(E)^2 \pmod{l}$, then we reach a
contradiction which shows that (\ref{eqn 2_2l_3}) has no solutions
for $n=l$. Now suppose furthermore that $p$ splits in $R$. Denote
by $\Ideal{p}$ any of the 2 primes of $R$ lying above $p$, for any
$x\in R$ denote by $\overline{x}$ the reduction of $x$ modulo
$\Ideal{p}$ in $R/\Ideal{p} \simeq \F_p$ and set
$r_3:=\overline{\sqrt{3}}$. By reducing (\ref{eqn descent 3})
modulo $\Ideal{p}$ we get
\[
3\overline{v}=\zeta_0, \quad r_3 \overline{u}-\overline{v}=r_3
\zeta_1 \overline{\epsilon}, \quad \mathrm{and} \quad r_3
\overline{u}+\overline{v}=r_3 \zeta_2 \overline{\epsilon}^{-1}
\]
for some $\zeta_0,\zeta_1,\zeta_2 \in \mu_k(\F_p)$. Let
$U:=u/(3v)$ as before, set $\zeta_1':=\zeta_1/\zeta_0,
\zeta_2':=\zeta_2/\zeta_0$ and divide by $3 r_3 \overline{v}=r_3
\zeta_0$ to obtain
\begin{equation}\label{eqn U bar}
\overline{U}-\frac{1}{3r_3}=\zeta_1'\overline{\epsilon},\quad
\overline{U}+\frac{1}{3r_3}=\zeta_2'\overline{\epsilon}^{-1}.
\end{equation}
By Dirichlet's unit theorem $R^*= \langle -1,\epsilon_f \rangle$
for some fundamental unit $\epsilon_f \in R$ (we can take for
example $\epsilon_f=2+\sqrt{3}$). So $\zeta_1'\overline{\epsilon},
\zeta_2'\overline{\epsilon}^{-1}$ belong to the subgroup of
$\F_p^*$ generated by $\mu_k(\F_p)$ and $\overline{\epsilon_f}$,
which we denote by $G_{k,p}$. If $\overline{\epsilon_f} \not \in
\mu_k(\F_p)$ one easily obtains that $G_{k,p}=\F_p^*$. Together
with (\ref{eqn U bar}) this only gives us back the original
information $\overline{U} \in S_{k,p}$. If however
$\overline{\epsilon_f} \in \mu_k(\F_p)$, then of course
$G_{k,p}=\mu_k(\F_p)$ and we get
\begin{equation}\label{eqn Skp bis}
\overline{U} \in S_{k,p}':=(\mu_k(\F_p) +\frac{1}{3r_3}) \cap
(\mu_k(\F_p) -\frac{1}{3r_3}).
\end{equation}
This might be much stronger information, because possibly (and
heuristically speaking, for large $k$ even very likely) $S_{k,p}'$
is strictly contained in $S_{k,p}$. So suppose
$\overline{\epsilon_f}^k=1$. Since
$a_p(E)^2=a_p(E_{\overline{U}})^2$, we reach a contradiction if
for all $\alpha \in S_{k,p}'$ we have $a_p(E_{\alpha})^2
\not\equiv a_p(E_0)^2 \pmod{l}$. We arrive at the following.

\begin{theorem}\label{thm sqrt 3 Kraus}
Let $l>3$ be prime and let $E_0$ be any elliptic curve over $\Q$
with conductor $96$. If there exists a $k \in \Z_{>0}$ such that
the following five conditions hold
\begin{enumerate}
\item $p:=kl+1$ is prime \item $p$ splits in $\Z[\sqrt{3}]$
\item\label{item fundamental unit}
$p|\Norm_{\Q(\sqrt{3})/\Q}\left((2+\sqrt{3})^k-1\right)$ \item
$a_p(E_0)^2 \not\equiv 4 \pmod{l}$ \item $a_p(E_{\alpha})^2
\not\equiv a_p(E_0)^2 \pmod{l}$ for all $\alpha \in S_{k,p}'$,\\
where $S_{k,p}'$ and $E_{\alpha}$ are given by (\ref{eqn Skp bis})
and (\ref{eqn Ealpha}) respectively,
\end{enumerate}
then (\ref{eqn 2_2l_3}) has no solutions for $n=l$.
\end{theorem}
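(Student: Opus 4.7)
The plan is to combine the descent from Lemma \ref{lem descent} with a factorization in $R := \Z[\sqrt{3}]$ and the modular machinery already in place, so that condition \ref{item fundamental unit} forces the reduction of $u/(3v)$ into the small set $S_{k,p}'$.

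First I would start from a hypothetical solution $(x,y,z)$ to (\ref{eqn 2_2l_3}) for $n=l$, extract $u,v,r,s$ from Lemma \ref{lem descent}, and carry out the factorization of $3u^2-v^2=3s^l$ in $R$. The ring $R$ has class number one, so it suffices to work with elements. The conjugate factors $\sqrt{3}u-v$ and $\sqrt{3}u+v$ satisfy $(\sqrt{3}u-v)+(\sqrt{3}u+v)=2\sqrt{3}u$ and $(\sqrt{3}u+v)-(\sqrt{3}u-v)=2v$; using $\gcd(u,v)=1$, $2\mid uv$ and $3\mid v$, a short inspection of divisors shows that any common prime divisor must lie above $3$, and in fact $(\sqrt{3}u-v,\sqrt{3}u+v)=(\sqrt{3})$ with $\sqrt{3}\,\|\,\sqrt{3}u\pm v$. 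Combining this with $(\sqrt{3}u-v)(\sqrt{3}u+v)=3s^l$ and the fact that $l$ is odd yields the factorization (\ref{eqn descent 3}) for some nonzero $x_1,x_2\in R$ and a unit $\epsilon\in R^*$.

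Next I would invoke Theorem \ref{thm Irreducibility} to know that $\rho_l^E$ is irreducible for the Frey curve $E$ of (\ref{eqn Frey Curve}); then Lemma \ref{lem Modular approach}, together with the assumption $a_p(E_0)^2\not\equiv 4\pmod{l}$ and the primality of $p=kl+1$, forces $p\nmid v(3u^2-v^2)$ and $a_p(E)^2\equiv a_p(E_0)^2\pmod{l}$. The goal then becomes to contradict the latter congruence by pinning down the possibilities for $\overline{U}:=\overline{u/(3v)}$. Choosing a prime $\Ideal{p}$ of $R$ above $p$ (available because $p$ splits in $R$), let $r_3:=\overline{\sqrt{3}}\in\F_p^{*}$ be a square root of $3$ modulo $p$. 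Reducing (\ref{eqn descent 3}) modulo $\Ideal{p}$ and dividing through by $3r_3\overline{v}$ gives the two equations of (\ref{eqn U bar}): $\overline{U}-1/(3r_3)=\zeta_1'\overline{\epsilon}$ and $\overline{U}+1/(3r_3)=\zeta_2'\overline{\epsilon}^{-1}$, with $\zeta_1',\zeta_2'\in\mu_k(\F_p)$.

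The crux, and the main obstacle, is to control $\overline{\epsilon}$. By Dirichlet's unit theorem $R^*=\langle -1,\epsilon_f\rangle$ with $\epsilon_f=2+\sqrt{3}$, so $\overline{\epsilon}$ is, up to sign, a power of $\overline{\epsilon_f}$. Condition \ref{item fundamental unit}, written as $p\mid \Norm_{\Q(\sqrt{3})/\Q}((2+\sqrt{3})^k-1)$, is exactly the statement that $\overline{\epsilon_f}^k=1$, i.e.\ $\overline{\epsilon_f}\in\mu_k(\F_p)$. Since $-1\in\mu_k(\F_p)$ (because $k$ is even, as $p=kl+1$ is odd and $l>2$; equivalently $2\mid p-1=kl$ forces $2\mid k$), we obtain $\overline{\epsilon}\in\mu_k(\F_p)$. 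Plugging back into (\ref{eqn U bar}) gives $\overline{U}\pm 1/(3r_3)\in\mu_k(\F_p)$, that is $\overline{U}\in S_{k,p}'$ as defined in (\ref{eqn Skp bis}).

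Finally, I would close the argument as in the original modular approach: the curve $E_{\overline{U}}$ of (\ref{eqn Ealpha}) is a quadratic twist of the reduction $\tilde{E}(\F_p)$, so $a_p(E)^2=a_p(E_{\overline{U}})^2$. Condition (5) of the theorem says $a_p(E_{\alpha})^2\not\equiv a_p(E_0)^2\pmod{l}$ for every $\alpha\in S_{k,p}'$, and in particular for $\alpha=\overline{U}$, contradicting $a_p(E)^2\equiv a_p(E_0)^2\pmod{l}$. Hence no such solution exists. The delicate step, besides the descent in $R$, is ensuring that $\overline{\epsilon_f}^k=1$ really traps $\overline{\epsilon}$ inside $\mu_k(\F_p)$; this relies on the fact just noted that $k$ is even, so that $-1\in\mu_k(\F_p)$, which absorbs the ambiguous sign in $R^{*}$.
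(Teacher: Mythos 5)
Your proposal is correct and follows essentially the same route as the paper: the factorization of $3u^2-v^2=3s^l$ in $\Z[\sqrt{3}]$ giving (\ref{eqn descent 3}), irreducibility via Theorem \ref{thm Irreducibility} plus Lemma \ref{lem Modular approach}, reduction modulo a prime above $p$, and the use of condition \ref{item fundamental unit} to force $\overline{U}\in S_{k,p}'$. Your explicit observation that $k$ is even so that $-1\in\mu_k(\F_p)$ absorbs the sign ambiguity in $R^*$ is a small point the paper leaves implicit, but it is not a different argument.
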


\begin{remark}
Suppose that $k \in \N$ satisfies the first two conditions of
Theorem \ref{thm Chen}. Heuristically speaking, by considering the
expected size of $S_{k,p}$, it seems highly unlikely that if $k
\gtrsim l$, the last condition is satisfied. Now condition
\ref{item fundamental unit} of Theorem \ref{thm sqrt 3 Kraus} is
very restrictive. But if $k \in \N$ does satisfy the first four
conditions of Theorem \ref{thm sqrt 3 Kraus}, then the expected
size of $S'_{k,p}$ is much smaller then that of $S_{k,p}$ and it
only becomes highly unlikely that the last condition is satisfied
when $k \gtrsim l^2$. This is of course all very rough, but the
main idea about the  benefits of Theorem \ref{thm sqrt 3 Kraus}
are hopefully clear.
\end{remark}

\subsubsection{$l=7$.}

Let $l:=7$. There are no problems with irreducibility. However, a
large computer search did not reveal a $k \in \N$ satisfying the
three conditions of Theorem \ref{thm Chen} or the five conditions
of Theorem \ref{thm sqrt 3 Kraus} (and we believe that it is very
unlikely that such a $k \in \N$ exists).

Although the modular methods can give us many congruence relations
for a hypothetical solution $(x,y,z)$ of (\ref{eqn 2_2l_3}) with
$n=l=7$, such as $2|x$ and $29|y$, we are not able to show that
(\ref{eqn 2_2l_3}) has no solutions for $n=l=7$ along these lines.
Anyway, in \cite{PSS 07} all finitely many nonzero coprime $a,b,c
\in \Z$ satisfying $a^2+b^3=c^7$ are determined and one readily
checks that for none of the solutions $-c$ is a square. So we
conclude that (\ref{eqn 2_2l_3}) has no solutions for $n=l=7$, as
mentioned before.

\subsubsection{$l=31$.}

Now let $l:=31$, $k:=718$ and $p:=kl+1=22259$. Then one quickly
verifies that $p$ is a prime that splits in $\Z[\sqrt{3}]$ and
that $\overline{\epsilon_f}^k=1$. We calculate $a_p(E_0)=\pm 140$,
so $a_p(E_0)^2 \equiv 8 \not\equiv 4 \pmod{l}$. Finally, the set
$S_{k,p}'$ is easily determined explicitly. All the elements
$\alpha \in S_{k,p}'$ are given in the first column of Table
\ref{Table U ap} together with the corresponding values of
$a_p(E_{\alpha})^2 \pmod{l}$.

\begin{table}[ht]
\begin{center}
\begin{tabular}{c|c}
$\pm \alpha$ & $a_p(E_{\alpha})^2 \pmod{l}$\\
\hline
127 & 28\\
1852 & 19\\
2818 & 1\\
3146 & 10\\
3615 & 9\\
3764 & 16\\
4419 & 18\\
5889 & 25\\
7994 & 20\\
8058 & 0\\
8330 & 19\\
10171 & 18\\
10561 & 5\\
\end{tabular}
\caption{elements of $S_{k,p}'$ with corresponding values of
$a_p(E_{\alpha})^2 \pmod{l}$}\label{Table U ap}
\end{center}
\end{table}

Recall that $a_p(E_0)^2 \equiv 8 \pmod{l}$, so from the second
column of Table \ref{Table U ap} we see that $a_p(E_{\alpha})^2
\not\equiv a_p(E_0)^2 \pmod{l}$ for all $\alpha \in S_{k,p}'$. By
Theorem \ref{thm sqrt 3 Kraus} we can now conclude that (\ref{eqn
2_2l_3}) has no solutions for $n=l=31$. This concludes the proof
of Theorem \ref{thm main}.

\begin{remark}
For $l=31$ the smallest $k \in \N$ satisfying the five conditions
of Theorem \ref{thm sqrt 3 Kraus} is $k=718$. Another $k \in \N$
satisfying these conditions is $k=2542$, and it seems very likely
that there are no other such $k$.
\end{remark}

\section{Extra information from quadratic reciprocity}

In this section we shall prove Theorem \ref{thm n congruence}.

In \cite[Section 4]{Chen Siksek 09}, quadratic reciprocity (over
$\Q$) is used to obtain that there are no nonzero coprime integers
$a,b,c$ satisfying $a^3+b^3=c^n$ for $n \in \N$ with $n\equiv 51,
103 \mathrm{\ or\ } 105 \pmod{106}$. The same method can be
applied to (\ref{eqn 2_2l_3}). The key information obtained is the
following.

\begin{lemma}\label{lem quadratic reciprocity}
Let $l>3$ be prime, suppose that $(x,y,z)$ is a solution to
(\ref{eqn 2_2l_3}) for $n=l$ and let $r,s$ be as in Lemma \ref{lem
descent}. Then $s-r^2$ is a square modulo $7$.
\end{lemma}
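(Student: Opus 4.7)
The plan is to reduce the claim to a Jacobi symbol identity that can be evaluated using quadratic reciprocity, the descent relations, and a single congruence mod~$3$.

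First I would rewrite the descent more cleanly. Since $3\mid v$ by Lemma~\ref{lem descent}, set $v_1 := v/3 \in \Z$. From $3v = r^l$ one obtains $r^l = 9v_1$, and substituting $v = 3v_1$ into $3u^2-v^2 = 3s^l$ and dividing by~$3$ gives the integer identity
\[
u^2 - 3v_1^2 = s^l,\qquad r^{2l} = 81\,v_1^2.
\]
This is the working factorization. Reducing $u^2 - 3v_1^2 = s^l$ modulo~$3$ and using $\gcd(u,v)=1$ together with $3\mid v$ to conclude $3\nmid u$, one gets $s^l \equiv u^2 \equiv 1 \pmod 3$; since $3\nmid s$ (because $3\mid r$ and $\gcd(r,s)=1$) and $l$ is odd, this forces $s\equiv 1\pmod 3$.

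The heart of the argument is a reciprocity step applied to odd prime divisors of $d := s-r^2$. Note $\gcd(d,rs)=1$ since $\gcd(r,s)=1$, and $3\nmid d$ since $3\mid r$ while $s\equiv 1\pmod 3$. For an odd prime $q\mid d$ with $q\neq 7$, also $q\nmid v_1$ (otherwise $q\mid r^l$ hence $q\mid r$, contradicting $q\mid s$ via $s\equiv r^2\pmod q$). Reducing $u^2 - 3v_1^2 = s^l$ modulo $q$ and substituting $s^l \equiv r^{2l} = 81v_1^2 \pmod q$ yields
\[
u^2 \equiv 84\,v_1^2 \pmod q.
\]
Dividing by $4v_1^2$ shows $21$ is a square mod~$q$, so $\left(\tfrac{3}{q}\right)\left(\tfrac{7}{q}\right)=1$, i.e.\ $\left(\tfrac{7}{q}\right)=\left(\tfrac{3}{q}\right)$.

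Finally I would chain this with quadratic reciprocity. If $7\mid d$ we are immediately done. Otherwise, since $\left(\tfrac{2}{7}\right)=1$, we may strip any factors of~$2$ from $d$ without changing $\left(\tfrac{d}{7}\right)$, and the identity above propagates multiplicatively to give $\left(\tfrac{7}{|d|_{\mathrm{odd}}}\right)=\left(\tfrac{3}{|d|_{\mathrm{odd}}}\right)$ as Jacobi symbols. Applying quadratic reciprocity twice (for $7$ and $d$, and for $3$ and $d$), the exponent of $-1$ collapses and one obtains
\[
\left(\tfrac{d}{7}\right) \;=\; \left(\tfrac{d}{3}\right) \;=\; \left(\tfrac{s}{3}\right) \;=\; 1,
\]
using $r^2\equiv 0 \pmod 3$ and $s\equiv 1\pmod 3$. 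The main technical obstacle is keeping careful track of the sign of $d$ and the power of~$2$ dividing it—handled by the parity analysis of $u,v,r,s$ from Lemma~\ref{lem descent} (in particular $s\equiv 1\pmod 4$, so $v_2(d)$ is even in the only case where $d$ is even)—together with the isolated case $7\mid d$, where the symbol vanishes.
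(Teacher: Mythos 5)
Your overall route is the same as the paper's: reduce to showing $21$ is a quadratic residue modulo every odd prime divisor $q$ of $d:=s-r^2$ (via $s^l\equiv r^{2l}\pmod q$ and the descent identity, which is exactly the paper's $(3u)^2\equiv 3\cdot 7\,(2v)^2\pmod q$ in different clothing), then flip with quadratic reciprocity and use $d\equiv s\equiv 1\pmod 3$ to isolate the symbol at $7$. That part is fine. The genuine gap is in the $2$-adic step. Your chain of symbols forces you to show that $\ord_2(d)$ is \emph{even} (since $\left(\tfrac{2}{3}\right)=-1$, an odd power of $2$ flips $\left(\tfrac{d}{3}\right)$ relative to $\left(\tfrac{|d|_{\mathrm{odd}}}{3}\right)$, and the whole comparison collapses to $\left(\tfrac{d}{7}\right)=(-1)^{\ord_2(d)}$). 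Your justification ``$s\equiv 1\pmod 4$, so $v_2(d)$ is even'' is a non sequitur: $s\equiv 1\pmod 4$ and $r$ odd only give $4\mid d$, which says nothing about the parity of $\ord_2(d)$ (take $s=33$, $r^2=1$: $d=32$). Indeed, in the subcase $2\,\|\,u$ one finds $s\equiv r^2\equiv 1\pmod 8$, so no congruence on $s$ and $r$ modulo a fixed power of $2$ can settle the valuation; the paper instead computes $\ord_2(s^l-r^{2l})=\ord_2\bigl((3u^2-28v^2)/3\bigr)$ exactly (it is $4$ if $2\,\|\,u$ and $2$ if $4\mid u$, by a computation mod $8$) and then observes that $(s^l-r^{2l})/(s-r^2)$ is a sum of $l$ odd terms, hence odd, so $\ord_2(d)=\ord_2(s^l-r^{2l})$ is even.

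A second, smaller omission: you attribute the needed parity facts to Lemma~\ref{lem descent}, but that lemma only gives $2\mid uv$. The case where $d$ is even is precisely the case $2\mid u$, $2\nmid v$, and the paper pins this down as the \emph{only} case using Theorem~\ref{thm Irreducibility} together with the level-lowering discussion (if $u$ were odd the Frey curve would land at level $6$, where there are no newforms). You should either invoke that input or treat both parity cases; as it happens the case $u$ odd makes $d$ odd and is harmless, but the case $2\mid u$ is exactly where your argument, as written, does not close.
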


\begin{proof}
Let $u,v$ be as in Lemma \ref{lem descent}. Using (\ref{eqn
descent 2}) we get
\begin{equation}\label{eqn quadratic}
s-r^2|3(s^l-r^{2l})=3u^2-28v^2.
\end{equation}
Suppose that $q$ is an odd prime that divides $s-r^2$. Then
$(3u)^2 \equiv 3 \cdot 7 (2v)^2 \pmod{q}.$ From $3v=r^l$ and
$\gcd(r,s)=1$ we obtain $q\nmid 2v$, so $3 \cdot 7$ is a square
modulo $q$, i.e. $\left(\frac{3\cdot7}{q}\right) \in \{0,1\}$. By
quadratic reciprocity we obtain $\left(\frac{q}{3\cdot7}\right)
\in \{0,1\}$. We have $\left(\frac{-1}{3\cdot7}\right)=1$. However
$\left(\frac{2}{3\cdot7}\right)=-1$, but we claim that
$\ord_2(s-r^2)$ is even, so that we get
$\left(\frac{s-r^2}{3\cdot7}\right) \in \{0,1\}$. We claim
furthermore that $\left(\frac{s-r^2}{3}\right)=1$, from which it
now follows that $s-r^2$ is a square modulo $7$. It remain to
prove our two claims. By Theorem \ref{thm Irreducibility} and the
discussion in Section \ref{sec A modular approach}, we know that
$2|u$, so $2\nmid v$. Suppose first that $2||u$, then
$3(u/2)^2-7v^2 \equiv 4 \pmod{8}$, hence
$\ord_2(s^l-r^{2l})=\ord_2(3u^2-28v^2)=4$. Next suppose that
$4|u$, then we immediately get
$\ord_2(s^l-r^{2l})=\ord(3u^2-28v^2)=2$. So in any case,
$\ord_2(s^l-r^{2l})$ is even. From the fact that $r,s$ are both
odd and by counting terms in $(s^l-r^{2l})/(s-r^2)$, we get that
the latter quantity is odd. We conclude that
$\ord_2(s-r^2)=\ord_2(s^l-r^{2l})$ is even, which proves our first
claim. For our second claim, note that $3|r, 3\nmid s$ and $l$ is
odd, so it remains to prove that $s^l$ is a square modulo $3$.
From $3|v$ we get $3|v^2/3$, so from (\ref{eqn descent 2}) we get
$s^l=u^2-v^2/3\equiv u^2 \pmod{3}$. This proves our second claim,
thereby finishing the proof of the lemma.
\end{proof}

From Lemma \ref{lem Modular approach} we obtain some information
on $(s/r^2)^l$ modulo $7$ in a straightforward way.

\begin{lemma}\label{lem straightforward}
Let $l>3$ be prime, suppose that $(x,y,z)$ is a solution to
(\ref{eqn 2_2l_3}) for $n=l$ and let $r,s$ be as in Lemma \ref{lem
descent}. Then $7 \nmid r$ and $(s/r^2)^l \equiv 2 \pmod{7}$
\end{lemma}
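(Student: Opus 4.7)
The plan is to apply Lemma \ref{lem Modular approach} with the auxiliary prime $p=7$ and then convert the resulting Frobenius congruence into direct information on $(s/r^2)^l \pmod 7$, using the setup that precedes Theorem \ref{thm Chen}.

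First I would verify the hypotheses of Lemma \ref{lem Modular approach}. Irreducibility of $\rho_l^E$ for every prime $l>3$ is exactly Theorem \ref{thm Irreducibility}. Picking any conductor-$96$ curve, for example $E_0: Y^2 = X^3 + X^2 - 2X$, a short point-count modulo $7$ yields $a_7(E_0)^2 = 16$. Since $(7+1)^2 - 16 = 48$ has no prime divisor greater than $3$, the condition $a_7(E_0)^2 \not\equiv (p+1)^2 \pmod{l}$ holds for every prime $l>3$. Lemma \ref{lem Modular approach} then gives $7 \nmid v(3u^2-v^2) = (rs)^l$ (so in particular $7 \nmid r$) together with $a_7(E)^2 \equiv 16 \pmod{l}$.

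It remains to pin down $(s/r^2)^l \pmod 7$. From the discussion preceding Theorem \ref{thm Chen}, the reduction $\overline{U}$ of $U:=u/(3v)$ modulo $7$ satisfies $a_7(E)^2 = a_7(E_{\overline{U}})^2$, while $U^2 = 1/27 + (s/r^2)^l$ reduces (using $1/27 \equiv -1 \pmod 7$) to $(s/r^2)^l \equiv \overline{U}^2 + 1 \pmod 7$. The Hasse bound gives $a_7(E_{\alpha})^2 \in \{0,1,4,9,16,25\}$ for any $\alpha \in \F_7$, so the congruence $a_7(E_{\alpha})^2 \equiv 16 \pmod{l}$ with $l>3$ prime collapses to the equality $a_7(E_{\alpha})^2 = 16$.

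I would finish by tabulating $a_7(E_{\alpha})^2$ for each $\alpha \in \F_7$ via point-counting on $Y^2 = X^3 + 2\alpha X^2 + 6X$ over $\F_7$; the symmetry $a_7(E_{\alpha})^2 = a_7(E_{-\alpha})^2$ (since $-1$ is a non-square modulo $7$) halves the work. This small enumeration shows that $a_7(E_{\alpha})^2 = 16$ holds precisely for $\alpha = \pm 1$. Hence $\overline{U} \equiv \pm 1 \pmod 7$, giving $(s/r^2)^l \equiv 1 + 1 \equiv 2 \pmod 7$, as claimed. The only real work is the handful of point-counts over $\F_7$, which is entirely routine.
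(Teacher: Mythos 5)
Your overall strategy is exactly the paper's: apply Lemma \ref{lem Modular approach} at the auxiliary prime $p=7$, note that $(7+1)^2-a_7(E_0)^2=48$ has no prime factor exceeding $3$, deduce $7\nmid rs$ and $a_7(E_{\overline{U}})^2\equiv 16\pmod{l}$, and then pin down $\overline{U}=\pm 1$ so that $(s/r^2)^l=U^2-1/27\equiv \overline{U}^2+1\equiv 2\pmod{7}$. All of that matches the paper, and the point-counts you cite ($a_7(E_0)^2=16$, and $a_7(E_\alpha)^2=16$ exactly for $\alpha=\pm1$) are correct.

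There is, however, a genuine flaw in one step: the claim that the Hasse bound makes the congruence $a_7(E_{\alpha})^2 \equiv 16 \pmod{l}$ collapse to the equality $a_7(E_{\alpha})^2 = 16$. The possible values $a_7(E_\alpha)^2\in\{0,1,4,9,16,25\}$ give differences $16-a_7(E_\alpha)^2\in\{16,15,12,7,0,-9\}$, and both $15$ and $7$ have a prime factor larger than $3$. So for $l=5$ the value $a_7(E_\alpha)^2=1$ also satisfies the congruence, and for $l=7$ so does $a_7(E_\alpha)^2=9$; since the lemma is asserted for every prime $l>3$ (and $l=5$ is genuinely needed later, as $5\equiv -1\pmod 6$), your deduction does not go through as stated. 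The argument is saved only by the actual data: the tabulation you propose gives $a_7(E_\alpha)^2=0$ for $\alpha=0$ and $a_7(E_\alpha)^2=4$ for $\alpha=\pm2,\pm3$, so the differences from $16$ that actually occur for $\alpha\neq\pm1$ are $16$ and $12$, neither divisible by a prime $>3$. This is precisely the check the paper records ($a_7(E_0)^2-a_7(E_\alpha)^2\in\{12,16\}$ for $\alpha\neq\pm1$). Replace the ``collapse to equality'' step by this inspection of the actual differences and your proof is complete and coincides with the paper's.
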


\begin{proof}
We shall use Lemma \ref{lem Modular approach} for $p=7$, so let
$u,v,E,E_0$ be as in that lemma. We calculate $a_7(E_0)=\pm 4$, so
$a_7(E_0) \not\equiv \pm 8 \pmod{l}$. Together with the
irreducibility of $\rho_l^E$ form Theorem \ref{thm
Irreducibility}, we get that $7 \nmid v(3u^2-v^2)=(rs)^l$ and
$a_7(E_0)^2 \equiv a_7(E)^2 \pmod{l}$. Let $U:=u/(3v)$ as before
and $\overline{U}$ the reduction of $U$ modulo $7$. Then
$a_7(E_0)^2 \equiv a_7(E_{\overline{U}})^2 \pmod{l}$, where
$E_{\alpha}$ denotes the elliptic curve over $\F_7$ given by
(\ref{eqn Ealpha}) for $\alpha \in \F_7$. We compute that for
$\alpha \not= \pm 1$ we have $a_7(E_0)^2-a_7(E_{\alpha})^2 \in
\{12,16\}$ (for $\alpha=\pm 1$ we have in fact
$a_7(E_0)^2=a_7(E_{\alpha})^2$). We see that if $\alpha \not= \pm
1$, then $a_7(E_0)^2 \not\equiv a_7(E_{\alpha})^2 \pmod{l}$.
Together with $a_7(E_0)^2 \equiv a_7(E_{\overline{U}})^2 \pmod{l}$
we obtain that $\overline{U}=\pm 1$. From (\ref{eqn descent 2}) we
get that
\[\left(\frac{s}{r^2}\right)^l=\frac{3u^2-v^2}{27v^2}=U^2-\frac{1}{27}.\]
Reducing this modulo $7$ and using $\overline{U}=\pm 1$, we arrive
at $(s/r^2)^l \equiv 2 \pmod{7}$.
\end{proof}

Combining the two lemmas above readily leads to the following.

\begin{theorem}\label{thm congruence l}
Let $l$ be prime with $l \equiv -1 \pmod{6}$. Then (\ref{eqn
2_2l_3}) has no solutions for $n=l$.
\end{theorem}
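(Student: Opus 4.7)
The plan is to derive a direct contradiction from the two lemmas preceding the theorem, exploiting the congruence condition on $l$ to convert the $l$-th power congruence modulo $7$ into a first power one via Fermat's little theorem.

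First I would assume, towards a contradiction, that $(x,y,z)$ is a solution to (\ref{eqn 2_2l_3}) for $n=l$ and let $r,s$ be as in Lemma \ref{lem descent}. Applying Lemma \ref{lem straightforward} gives $7\nmid r$ and $(s/r^2)^l \equiv 2 \pmod{7}$. Since $\F_7^*$ has order $6$ and $l \equiv -1 \pmod{6}$, Fermat's little theorem yields $(s/r^2)^l \equiv (s/r^2)^{-1} \pmod{7}$. Hence $s/r^2 \equiv 2^{-1} \equiv 4 \pmod{7}$, so $s \equiv 4 r^2 \pmod{7}$.

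Next, I would combine this with Lemma \ref{lem quadratic reciprocity}, which asserts that $s-r^2$ is a square modulo $7$. From $s \equiv 4 r^2 \pmod{7}$ we deduce $s - r^2 \equiv 3 r^2 \pmod{7}$. Because $7\nmid r$, the factor $r^2$ is a nonzero square modulo $7$, so $s-r^2$ being a square modulo $7$ would force $3$ to be a square modulo $7$. However, the set of nonzero squares modulo $7$ is $\{1,2,4\}$, and $3 \not\in \{1,2,4\}$, a contradiction. This rules out the existence of a solution and completes the proof.

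There is no real obstacle here: the entire argument is a short quadratic-reciprocity-style computation modulo $7$, and all the substantive work has already been done in Lemmas \ref{lem quadratic reciprocity} and \ref{lem straightforward}. The only point worth checking carefully is that $s - r^2$ is actually nonzero modulo $7$ (so we are truly using the nontriviality of the Legendre symbol rather than the trivial case $0$), which follows from $s \equiv 4r^2 \not\equiv 0 \pmod 7$ since $7 \nmid r$.
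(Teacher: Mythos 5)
Your proof is correct and follows essentially the same route as the paper: both derive $(s/r^2)^l \equiv (s/r^2)^{-1} \equiv 2 \pmod 7$ from Lemma \ref{lem straightforward} and $l \equiv -1 \pmod 6$, conclude $s - r^2 \equiv 3r^2 \pmod 7$ (the paper phrases this as $s/r^2 - 1 \equiv 3$), and contradict Lemma \ref{lem quadratic reciprocity} since $3$ is not a square modulo $7$. Your closing remark that $s - r^2 \not\equiv 0 \pmod 7$ is a harmless extra check, as the contradiction already follows from $3r^2$ lying outside $\{0,1,2,4\}$ modulo $7$.
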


\begin{proof}
Suppose that $(x,y,z)$ is a solution to (\ref{eqn 2_2l_3}) for
$n=l$ and let $r,s$ be as in Lemma \ref{lem descent}. From Lemma
\ref{lem straightforward} and $l\equiv -1 \pmod{6}$, we get
$(s/r^2)^l \equiv (s/r^2)^{-1} \equiv 2 \pmod{7}$. This gives
$s/r^2-1 \equiv 2^{-1}-1 \equiv 3 \pmod{7}$. From Lemma \ref{lem
quadratic reciprocity} we get that $s/r^2-1$ is a square modulo
$7$. However, $3$ is not a square modulo $7$, a contradiction
which proves the theorem.
\end{proof}

Theorem \ref{thm n congruence} now follows by noting that if $n
\equiv -1 \pmod{6}$, then $n$ is divisible by a prime $l \equiv -1
\pmod{6}$.

\begin{remark}
If $l \equiv 1 \pmod{6}$, then Lemma \ref{lem straightforward}
leads to $s/r^2-1 \equiv 1 \pmod{7}$, and since $1$ is a square in
$\F_7$ we do not get a contradiction together with Lemma \ref{lem
quadratic reciprocity}. Furthermore, a similar relation as
(\ref{eqn quadratic}) is given by
$s+r^2|3(s^l+r^{2l})=3u^2+26v^2$, however, similar arguments as
above do not work in this case also.

Finally, in \cite{Chen Siksek 09} not only quadratic reciprocity
over $\Q$ is applied to obtain results about the generalized
Fermat equation $a^3+b^3=c^n$, but also quadratic reciprocity over
(other) number fields. Applying similar methods in our situation
is not so straightforward. This is because $a^3+b^3$ splits into
linear factors in an imaginary quadratic number field, whose ring
of integers has finite unit group, whereas $v(3u^2-v^2)$ splits
into linear factors in a real quadratic number field, whose ring
of integers has infinite unit group. Note that we already dealt
with this infinite unit group in one particular situation, namely
when we solved (\ref{eqn 2_2l_3}) for $n=31$. Trying to approach
(\ref{eqn 2_2l_3}) using quadratic reciprocity over number fields
might be interesting future research.
\end{remark}

\section*{Acknowledgements}
This article contains material from Chapter 3 of the author's
Ph.D. thesis \cite{Dahmen 08}. The author would like to thank
Frits Beukers for many useful discussions related to this
material.

\bibliographystyle{elsarticle-num}

\end{document}